\renewcommand{\thefootnote}{\arabic{footnote}}
\newtheorem{theorem}{\color{black}\indent Theorem}
\newtheorem{lemma}{\color{black}\indent Lemma}[section]
\newtheorem{proposition}{\color{black}\indent Proposition}
\newtheorem{remark}{\color{black}\indent Remark}[section]
\newcommand\blfootnote[1]{%
		\begingroup
		\renewcommand\thefootnote{}\footnote{#1}%
		\addtocounter{footnote}{-1}%
		\endgroup
	}
\journal{?}
\begin{document}
\begin{frontmatter}
\title{KAM theory at the Quantum resonance}
\author{{ \blfootnote{$^{*}$Corresponding author } Huanhuan Yuan$^{a}$ \footnote{ E-mail address : yuanhh128@nenu.edu.cn},~ Yong Li$^{b,c,*}$  \footnote{E-mail address : liyong@jlu.edu.cn}\\
    {$^{a}$School of Mathematics and Statistics, Northeast Normal University,} {Changchun 130024, P. R. China.}\\
	{$^{b}$College of Mathematics, Jilin University,} {Changchun 130012, P. R. China.}\\
	{$^{c}$School of Mathematics and Statistics, and Center for Mathematics and Interdisciplinary Sciences, Northeast Normal University,}
	{Changchun 130024, P. R. China.}
}}

\begin{abstract}
  We consider the semiclassical operator $\hat{H}(\epsilon,h):=H_{0}(hD_{x})+\epsilon \tilde{P}_{0}$ on $L^{2}(\mathbb{R}^{l})$, where the symbol of $\hat{H}(\epsilon,h)$ corresponds to a perturbed classical Hamiltonian of the form:
  \begin{align*}
  H(x,y,\epsilon)=H_{0}(y)+\epsilon P_{0}(x,y).
  \end{align*}
  Here, $\tilde{P}_{0}=Op_{h}^{W}(P_{0})$ is a bounded pseudodifferential operator with a holomorphic symbol that decays to zero at infinity, and $\epsilon\in \mathbb{R}$ is a small parameter. We establish that for small $|\epsilon|<\epsilon^{*}$, there exists a frequency $\omega(\epsilon)$ satisfying condition \eqref{b}, such that the spectrum of $\hat{H}(\epsilon,h)$ is given by the quantization formula:
  \begin{align*}
  E(n_{y},E_{u},E_{v},\epsilon,h)=\varepsilon(h,\epsilon)+h\sum_{j=1}^{d}\omega_{j}(n_{y}^{j}+\frac{\vartheta_{j}}{4})+\frac{\epsilon}{2}\bigg(\sum_{j=1}^{d_{0}}\lambda_{j}
  (n_{u}^{j}+\frac{1}{2})+\sum_{j=1}^{d_{0}}\tilde{\lambda}_{j}(n_{v}^{j}+\frac{1}{2})\bigg)+O(\epsilon\exp(-ch^{\frac{1}{\alpha-1}})),
  \end{align*}
  where $\alpha>1$ is Gevrey index, $\vartheta$ represents the Maslov index of the torus.
  This spectral expression captures the detailed structure of the perturbed system, reflecting the influence of partial resonances in the classical dynamics. In particular, the resonance-induced quadratic terms give rise to clustering of eigenvalues, determined by the eigenvalues $\lambda_{j}$ and $\tilde{\lambda}_{j}$ of the associated quadratic form in the resonant variables. Moreover, the corresponding eigenfunctions exhibit semiclassical localization—quantum scarring—on lower-dimensional invariant tori formed via partial splitting under resonance.
\end{abstract}
\begin{keyword}
Resonant tori, Resonant quantum number, Energy level, Semiclassical scar
\end{keyword}
\end{frontmatter}


\section{Introduction}
The persistence of quasi-invariant tori in quantum systems, as governed by quantum KAM theory, remains a central problem in the spectral analysis of the perturbed Hamiltonians. The persistence of quasi-invariant structures under small perturbations has been extensively studied in infinite-dimensional classical Hamiltonian systems, such as nonlinear wave and Schrödinger equations (e.g., Berti and Bolle, \cite{MR4321994}; Eliasson and Kuksin, \cite{MR2680422}). In the quantum setting, recent work by Grébert and Paturel, \cite{MR4093983} has addressed the reducibility and spectral properties of time-quasiperiodic perturbations of quantum harmonic oscillators using pseudodifferential techniques. The microlocal frameworks employed by Dyatlov and Zworski \cite{MR3969938} and by Sjöstrand \cite{MR2674764} have deepened the understanding of spectral instabilities and resonance distributions for non-selfadjoint and semiclassical operators. However, in systems with low-dimensional invariant tori, quantum states may exhibit anomalous localization phenomena (such as 'scarring'), deviating from classical ergodicity (Anantharaman and Macià, \cite{MR3226742}). In this study, we use rigorous microlocal analysis to show that the quantum normal form under resonance conditions induces spectral cluster splitting and supports semiclassical localization of scarring states on invariant tori. Our results offer a new perspective on spectral stability and localization in the semiclassical regime of infinite-dimensional Hamiltonian systems.

In classical Hamiltonian systems, an $l$-dimensional invariant torus may break into lower-dimensional invariant tori $\mathbb{T}^{d}$ under resonance frequency conditions. In this study, we will observe this phenomenon in quantum systems by employing the Quantum Normal Form of the Hamiltonian operator $\hat{H}(\epsilon,h)$ to analyze its spectral structure. Furthermore, we investigate the semiclassical behavior in stable directions of the corresponding quantum states on $\mathbb{T}^{d}$, with a particular focus on localization properties and spectral clustering phenomena. Inspired by Heller’s approach, we use a single-parameter control mechanism to regulate spectral density, providing insights into the distribution of quantum states on lower-dimensional invariant tori. This work not only deepens our understanding of quantum states under resonance conditions but also offers a novel perspective on quantum manifestations of KAM dynamics.

Recent advances in the classical normalization of resonant systems, such as the normalization flow (a continuous dynamical process) developed by Treschev \cite{MR4864461}, underscore the importance of resonance in shaping the structural properties of Hamiltonians. In our work, the clustering of eigenvalues induced by resonance is captured in the quantum normal form, particularly through the second-order term involving the resonant matrix $M_{p}(\omega)$, which governs the leading spectral shifts. The presence of $d_{0}$-dimensional resonances results in a finer clustering of eigenvalues within $O(\epsilon h)$-bands, with the inter-band separation determined by the unperturbed frequencies $\omega_{j}$ and the intra-band structure determined by the eigenvalues of $M_{p}(\omega)$. This clustering of eigenvalues on the $O(\epsilon h)$-scale arises from the resonant quadratic terms in the normal form, independent of the exponentially small remainder, which only influences higher-order precision in the spectral approximation. This method applies to a wide class of semiclassical pseudodifferential operators exhibiting resonance phenomena and provides a detailed understanding of spectral shifts in the presence of degeneracies.

Our approach combines semiclassical Birkhoff normal form theory with microlocal analysis to construct a quantum normal form for resonant Hamiltonians. We derive the resonant quantum normal form for semiclassical pseudodifferential operators under partial frequency commensurability and provide a full asymptotic expansion of the eigenvalues. This framework reveals resonance-induced clustering and lays the foundation for studying quantum scarring on invariant tori.

In recent years, there has been substantial progress in understanding spectral structures of semiclassical operators under perturbations. Works such as those by Kordyukov \cite{b}, Golinskii and Stepanenko \cite{MR4707547}, Bachmann et al. \cite{MR4707549}, and Beckus and Bellissard \cite{MR3568021} have explored spectral asymptotics, eigenvalue clustering, and continuity of spectral, offering valuable tools and perspectives relevant to our study of quantum normal forms and spectral quantization under resonant conditions.
Charles, L. in \cite{MR4776289} investigated the spectral structure of the magnetic Laplacian operator of Hermitian line bundles with non-degenerate curvature on compact Riemann manifolds. The author showed that at high tensor powers, the spectrum exhibits gaps and clusters, and the number of eigenvalues in each cluster was given by the Riemann-Roch number. For general results on spectral theory, see Kato \cite{MR203473} or Shubin \cite{MR883081}.
\subsection{Assumption}
We begin by defining a function space with Gevrey index $\alpha$ (where $\alpha=1$ corresponds to the setting in \cite{MR1724855}), denoted as $\mathcal{F}_{\rho,\sigma}^{\alpha}, \rho,\sigma\geq1$ and $\alpha>1$:
\begin{align*}
\mathcal{F}_{\rho,\sigma}^{\alpha}:=\{f: \mathbb{R}^{2l}\rightarrow \mathbb{C}\ |\ \|f\|_{\rho,\sigma}^{\alpha}<+\infty\}
\end{align*}
equipped with the following norm
\begin{align}\label{aw}
\|f\|_{\rho,\sigma}^{\alpha}:=\sum_{k\in\mathbb{Z}^{l}}e^{\rho|k|^{\frac{1}{\alpha}}}\int_{\mathbb{R}^{2l}}|\hat{\tilde{f}}(s)|e^{\sigma|s|^{\frac{1}{\alpha}}}{\rm d}s.
\end{align}
The class of semiclassical Weyl pseudodifferential operator $F$ in $L^{2}(\mathbb{R}^{l})$ with symbol $f(x,\xi)\in\mathcal{F}_{\rho,\sigma}^{\alpha}$ denoted $\Phi_{\rho,\sigma}^{\alpha}$ reads:
\begin{align*}
(Fu)(x):&=Op_{h}(f)u(x)\\
& =\frac{1}{(2\pi h)^{l}}\int\int_{\mathbb{R}^{l}\times\mathbb{R}^{l}}e^{i\langle (x-y),\xi\rangle/h}f\bigg(\frac{x+y}{2},\xi\bigg)u(y){\rm d}y{\rm d}\xi,\ u\in \mathcal{S}(\mathbb{R}^{l}).
\end{align*}
\begin{remark}
$F:=Op_{h}(f)$ is a trace-class operator, and for real-valued $f$, self-adjoint $h-$pseudo-\\differential operator in $L^{2}(\mathbb{R}^{l})$, if $f\in \mathcal{F}_{\rho,\sigma}^{\alpha}$. Let $\hat{f}$ be the Fourier transform of $f$. Since $\|\hat{f}\|_{L^{1}}\leq \|f\|_{\rho,\sigma}^{\alpha}$, we have
\begin{align*}
\|F\|_{L^{2}\rightarrow L^{2}}\leq \int_{\mathbb{R}^{2l}}|\hat{f}(s)|{\rm d}s\equiv \|\hat{f}\|_{L^{1}},\quad \|F\|_{L^{2}\rightarrow L^{2}}\leq \|f\|_{\rho,\sigma}^{\alpha}.
\end{align*}
\end{remark}

We introduce also the space $\mathcal{F}_{\sigma}$ of all functions $f: \mathbb{R}^{2l}\rightarrow \mathbb{C}$ such that
\begin{align*}
\|f\|_{\sigma}^{\alpha}:=\int_{\mathbb{R}^{2l}}|\hat{\tilde{f}}(s)|e^{\sigma|s|^{\frac{1}{\alpha}}}{\rm d}s<\infty.
\end{align*}

\begin{proposition}\label{bk}
In Gevrey function space, if $f,g\in \mathcal{F}_{\rho,\sigma}^{\alpha}$, then, for any $0<\rho'<\rho,0<\sigma'<\sigma$,
\begin{align*}
\|\{f,g\}\|_{\rho',\sigma'}^{\alpha}\leq \frac{1}{(\rho-\rho')(\sigma-\sigma')}\|f\|_{\rho,\sigma}^{\alpha}\|g\|_{\rho,\sigma}^{\alpha}.
\end{align*}
\end{proposition}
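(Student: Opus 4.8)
\emph{Strategy.} The plan is to pass to the Fourier representation underlying the norm \eqref{aw} and to use that a Poisson bracket is a finite sum of products of first-order derivatives. On the transform side a first derivative is multiplication of one of the two factors by a single frequency coordinate, a product of symbols is a convolution of their $\hat{\tilde{\cdot}}$-data, and the whole estimate reduces to absorbing that one polynomial weight into an arbitrarily small shrinking of the Gevrey radii $\rho,\sigma$ — which is exactly where the factor $\tfrac{1}{(\rho-\rho')(\sigma-\sigma')}$ is produced.

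\emph{Steps.} First I would write $\{f,g\}=\sum_{j=1}^{l}\big(\partial_{\xi_{j}}f\,\partial_{x_{j}}g-\partial_{x_{j}}f\,\partial_{\xi_{j}}g\big)$ and record that a derivative in a periodic (angle) variable multiplies $\hat{\tilde f}$ by a multiple of the discrete frequency $k_{j}$, a derivative in the conjugate variable multiplies it by the continuous frequency coordinate $s_{j}$, and the product of two symbols corresponds to the convolution of their data in the combined frequency. Second, in a convolution summand with output frequencies $n=k+k'$ and $t=s+s'$ I would split the output weight by subadditivity of $r\mapsto r^{1/\alpha}$ (valid since $\alpha\ge1$), $e^{\rho'|n|^{1/\alpha}}e^{\sigma'|t|^{1/\alpha}}\le \big(e^{\rho'|k|^{1/\alpha}}e^{\sigma'|s|^{1/\alpha}}\big)\big(e^{\rho'|k'|^{1/\alpha}}e^{\sigma'|s'|^{1/\alpha}}\big)$; since the integrand is nonnegative, Tonelli then factors the weighted $\ell^{1}L^{1}$-mass of the convolution into the product of the masses of the two factors. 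Third — the Gevrey Cauchy step — in the factor carrying the $x$-derivative I would bound $|k'_{j}|\le|k'|$ and $|k'|\,e^{\rho'|k'|^{1/\alpha}}\le\big(\sup_{r\ge0}r\,e^{-(\rho-\rho')r^{1/\alpha}}\big)e^{\rho|k'|^{1/\alpha}}$, and symmetrically absorb $|s_{j}|$ against $e^{\sigma'|s|^{1/\alpha}}$ using the gap $\sigma-\sigma'$. Recombining, each of the $2l$ terms of the bracket is $\le C(\alpha)\,(\rho-\rho')^{-a}(\sigma-\sigma')^{-b}\,\|f\|_{\rho,\sigma}^{\alpha}\|g\|_{\rho,\sigma}^{\alpha}$ with a fixed negative power per gap, and summing in $j$ yields the asserted inequality.

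\emph{Main obstacle.} The only real content is the Gevrey Cauchy inequality, i.e.\ the rate of $\sup_{r\ge0}r\,e^{-\delta r^{1/\alpha}}$ as $\delta\downarrow0$: the supremum is attained at $r=(\alpha/\delta)^{\alpha}$ and equals $(\alpha/e)^{\alpha}\delta^{-\alpha}$, so one has to decide how this $\alpha$-dependent loss is bookkept against the clean factor $\tfrac{1}{(\rho-\rho')(\sigma-\sigma')}$ of the statement — it is absorbed into an $(\alpha,l)$-dependent constant, and one has some freedom in distributing the available contraction of radii between the $k$- and the $s$-directions. The remaining points are routine: that $\hat{\tilde{\cdot}}$ carries products to convolutions and derivatives to frequency multiples, and that the interchanges of $\sum_{k}$ and $\int\mathrm{d}s$ are legitimate, all of which follow from the absolute convergence guaranteed by $f,g\in\mathcal{F}_{\rho,\sigma}^{\alpha}$.
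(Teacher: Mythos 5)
The paper does not actually give a proof of Proposition~\ref{bk}; it is stated without argument (the $\alpha=1$ analogue is attributed to \cite{MR1724855}), so there is nothing to compare against line by line. Your strategy — pass to the Fourier data underlying the norm~\eqref{aw}, represent the Poisson bracket as a finite sum of products of one $x$-derivative with one $\xi$-derivative, turn products into convolutions and derivatives into multiplication by $k_j$ resp.\ $s_j$, split the output weight by subadditivity of $r\mapsto r^{1/\alpha}$ (valid since $\alpha\ge1$), apply Tonelli, and absorb the resulting polynomial weight by shrinking the radii — is exactly the natural and correct argument for this kind of Cauchy estimate in a weighted $\ell^{1}L^{1}$ Fourier norm. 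All of the structural steps you list are legitimate.

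The place where your proposal is \emph{not} a proof of the proposition as written is the one you yourself flag in the last paragraph, and you then handle it too optimistically. Your Gevrey Cauchy step gives
\begin{equation*}
\sup_{r\ge 0} r\,e^{-(\rho-\rho')\,r^{1/\alpha}}=\Bigl(\tfrac{\alpha}{e}\Bigr)^{\alpha}(\rho-\rho')^{-\alpha},
\end{equation*}
and similarly for the $\sigma$-direction, so what your argument actually proves is
\begin{equation*}
\|\{f,g\}\|_{\rho',\sigma'}^{\alpha}\;\le\;\frac{C(\alpha)}{(\rho-\rho')^{\alpha}(\sigma-\sigma')^{\alpha}}\,\|f\|_{\rho,\sigma}^{\alpha}\,\|g\|_{\rho,\sigma}^{\alpha},
\end{equation*}
with $C(\alpha)$ on the order of $(\alpha/e)^{2\alpha}$. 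You say this discrepancy is ``absorbed into an $(\alpha,l)$-dependent constant,'' but that does not close the gap for two separate reasons: first, the proposition as stated carries \emph{no} multiplicative constant at all in which to hide $C(\alpha)$; second, and more fundamentally, the exponent mismatch is not a constant — as $\rho'\uparrow\rho$ the factor $(\rho-\rho')^{-\alpha}$ dominates $(\rho-\rho')^{-1}$ whenever $\alpha>1$, so no fixed constant can convert one into the other. Only in the case $\alpha=1$ does your optimization give exponent $1$ and constant $e^{-1}$ per direction, which combined with the factor $2$ from the two bracket summands still sits below $1$, so the clean bound $\tfrac{1}{(\rho-\rho')(\sigma-\sigma')}$ is exactly the analytic-category version. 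For $\alpha>1$, the exponent should be $\alpha$, and the honest conclusion is that your proof is correct while Proposition~\ref{bk} is stated loosely (the same issue recurs later in the paper, e.g.\ in the bounds $\|\nabla_{z}f\|_{\rho-d,\sigma-s}^{\alpha}\le\frac{1}{ds}\|f\|_{\rho,\sigma}^{\alpha}$ used in the proof of Theorem~\ref{bc}). You should state the inequality you actually prove, with exponent $\alpha$ and an explicit constant, rather than claim the paper's form.
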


\subsection{Main result}
Early efforts in quantum KAM theory focused on the study of quasi-periodic Schr\"{o}dinger operators, such as \cite{MR470318}, and on deriving quantization formulas for fixed values of $h$ (Planck's constant), notably in \cite{MR824989, MR912757}and Popov \cite{MR1770800} constructed quasimodes to approximate eigenvalues and eigenfunctions. These works typically relied on Diophantine conditions on the frequency vectors and provided foundational insights into spectral stability and localization mechanisms.

Later, Popov \cite{MR1770800} advanced the semiclassical analysis of nearly integrable systems by constructing quantum Birkhoff normal forms through abstract unitary conjugation methods, and established a quantum analogue of Arnold’s effective stability theorem via the construction of exponentially accurate quasimodes. This lays the foundation for a systematic treatment of nearly integrable quantum systems. More recently, Charles and San V\~{u} Ng\d{o}c \cite{MR2423760} introduced Birkhoff normal form techniques into the setting of semiclassical pseudodifferential operators, achieving precise spectral asymptotics and uncovering fine structures such as spectral clustering and quantum scarring in the presence of partial resonances.

Consider in $L^{2}(\mathbb{R}^{l})$ the pseudodifferential operator family $\hat{H}(\epsilon,h)=H_{0}(hD_{x})+\epsilon \tilde{P}_{0}(\epsilon,h)$ and assume:
\begin{enumerate}[(A)]
  \item $H=H_{0}(y)+\epsilon P_{0}(x,y)$, where $y\in G\subset \mathbb{R}^{l}$, $x\in \mathbb{T}^{l}$, and $P_{0}$ are Gevrey perturbed functions defined on $G\times \mathbb{T}^{l}$; $H_{0}$ satisfies the nondegenate condition $\det \frac{\partial^{2}H_{0}}{\partial y^{2}}(y)\neq 0$ in $G$. $\epsilon>0$ is a small parameter.
  \item $\hat{H}(\epsilon,h)$ is an elliptic self-adjoint pseudodifferential operator with positive differential order $J$ with the symbol $H$, and the subprincipal symbol of $\hat{H}(\epsilon,h)$ is zero.
  \item $\tilde{P}_{0}\in \Phi_{\rho,\sigma}^{\alpha}$ is a bounded pseudodifferential operator with a holomorphic symbol that decays to zero at infinity; its symbol $P_{0}(x,y)=P_{0}(z)$ is real-valued for $z=(x,y)\in \mathbb{R}^{l}\times\mathbb{R}^{l}$.
  \item There exist $\gamma>0$ such that
  \begin{align}\label{b}
  |\langle k,\omega\rangle|>\frac{\gamma}{\Delta(|k|)},\ k\in\mathbb{Z}^{d}
  \end{align}
  for any $\gamma>0$, and some $\alpha-$approximation function $\Delta$, which is a continuous, strictly increasing, unbounded function $\Delta:[0,\infty)\rightarrow[1,\infty)$ and satisfies
  \begin{align}\label{ab}
  \frac{\log\Delta(t)}{t^{\frac{1}{\alpha}}}\searrow 0,\quad \text {as}\ t\rightarrow\infty,
  \end{align}
  and
  \begin{align}\label{ac}
  \int_{\varsigma}^{\infty}\frac{\log\Delta(t)}{t^{1+\frac{1}{\alpha}}} {\rm d}t<\infty,\quad  \alpha>1
  \end{align}
  with $\varsigma$ being positive constant close to 0. Denote $\Omega_{0}$ the set of all $\omega\in[0,1]^{l}$ fulfilling \eqref{b}. And
  \begin{align*}
  \langle k,\omega\rangle=0,\ k\in \mathbb{Z}^{d_{0}}
  \end{align*}
  for any $y\in O(g,G)$ defined in \eqref{av}.
 \end{enumerate}

\begin{theorem}\label{bc}
Let $(A)-(D)$ be verified; let $h_{0}>0$. There exists $\epsilon^{*}>0$, for all $\epsilon\in [-\epsilon^{*},\epsilon^{*}], \Omega^{\epsilon}\subset\Omega_{0}$ independent of $h$ and $\omega(h,\epsilon)\in\Omega^{\epsilon}$, the spectrum of the perturbed operators family $\hat{H}(\epsilon,h)=H_{0}(hD_{x})+\epsilon \tilde{P}_{0}(\epsilon,h)$ is given by the quantization formula
\begin{align*}
E(n_{y},E_{u},E_{v},\epsilon,h)=\varepsilon(h,\epsilon)+h\sum_{j=1}^{d}\omega_{j}(n_{y}^{j}+\frac{\vartheta_{j}}{4})+\frac{\epsilon}{2}\bigg(\sum_{j=1}^{d_{0}}\lambda_{j}
  (n_{u}^{j}+\frac{1}{2})+\sum_{j=1}^{d_{0}}\tilde{\lambda}_{j}(n_{v}^{j}+\frac{1}{2})\bigg)+O(\epsilon\exp(-ch^{\frac{1}{\alpha-1}})),
\end{align*}
and the quantum state corresponding to energy level $E(n_{y},E_{u},E_{v},\epsilon,h)$ has scarring behavior in stable direction under the semiclassical limit $h\rightarrow0$ on an $d$-dimensional torus.
Here,
\begin{enumerate}[(i)]
  \item $\varepsilon(x,\epsilon):[0,h_{0}]\times[-\epsilon^{*},\epsilon^{*}]\rightarrow \mathbb{R}$ is continuous in $x$ and analytic in $\epsilon$, with $\varepsilon(x,0)=0,\ \varepsilon(0,\epsilon)=0$;
  \item $\omega(x,\epsilon):[0,h_{0}]\times[-\epsilon^{*},\epsilon^{*}]\rightarrow \mathbb{R}$ is continuous in $x$ and analytic in $\epsilon$, with $\omega(x,0)=\omega$;
  \item $\mathcal{R}(y,z,\epsilon):\mathbb{R}^{d}\times\mathbb{R}^{l}\times[-\epsilon^{*},\epsilon^{*}]\rightarrow \mathbb{R}$ is continuous in $(y,z,\epsilon)$;
  \item $|\Omega^{\epsilon}-\Omega_{0}|\rightarrow0$ as $\epsilon\rightarrow0$.
\end{enumerate}
\end{theorem}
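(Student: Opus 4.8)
The plan is to build a \emph{resonant quantum normal form} for $\hat{H}(\epsilon,h)$, separate variables in it, and then read off both the spectrum and the localization of the eigenfunctions. \textbf{Step 1 (classical Gevrey normal form at the resonance).} Working first with symbols, I would localize near the $d$-dimensional resonant torus — $y$ frozen at a resonant value $y=g$, the resonant actions vanishing — and run a KAM-type iteration in the Gevrey class $\mathcal{F}_{\rho,\sigma}^{\alpha}$. At each step the homological equation $\{H_{0},\chi\}=\Pi_{\mathrm{nr}}f$ is solved \emph{only} on the non-resonant Fourier modes $k$ with $\langle k,\omega\rangle\neq 0$, where the divisor is bounded below by $\gamma/\Delta(|k|)$ through \eqref{b}; the resonant modes cannot be removed and are carried along. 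Proposition~\ref{bk} provides the quantitative Poisson-bracket control, and the superexponential decay of the successive remainders together with \eqref{ab} forces convergence to a Gevrey canonical map $\Phi_{\epsilon}$ with
\begin{align*}
H\circ\Phi_{\epsilon}=H_{0}^{*}(y;\epsilon)+\frac{\epsilon}{2}\,Q(y,u,v;\epsilon)+\epsilon R(x,y,u,v;\epsilon),
\end{align*}
where $Q$ is the resonant part — to leading order the quadratic form in the resonant variables $(u,v)$ built from the resonant average of the Hessian of $P_{0}$, i.e.\ the matrix $M_{p}(\omega)$ — and $R$ has $\mathcal{F}$-norm of the exponentially small size appearing in the theorem, produced by the quantization scaling $|k|\sim h^{-1}$ together with \eqref{ac}.

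\textbf{Step 2 (quantization of the normal form).} Next I would lift $\Phi_{\epsilon}$ to a semiclassical Fourier integral operator $U_{h}$, realized as a finite product of exponentials $e^{i\widehat{\chi_{j}}/h}$ with real Gevrey phases, unitary modulo an exponentially small error. An Egorov argument within the Gevrey symbol calculus then gives
\begin{align*}
U_{h}^{*}\,\hat{H}(\epsilon,h)\,U_{h}=H_{0}^{*}(hD_{x};\epsilon)+\frac{\epsilon}{2}\,Op_{h}^{W}(Q)+\epsilon\hat{R}_{h},\qquad \|\hat{R}_{h}\|_{L^{2}\to L^{2}}=O\big(\exp(-ch^{\frac{1}{\alpha-1}})\big).
\end{align*}
The vanishing of the subprincipal symbol (assumption (B)) is used precisely so that the $O(h)$ terms generated by the Moyal product do not displace the principal symbol, which is why the transported operator carries exactly the classical normal form; the Maslov index $\vartheta$ of the $d$-torus enters through the phase/metaplectic factors of $U_{h}$.

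\textbf{Step 3 (spectrum, frequency selection, scarring).} The normalized operator is, modulo $\epsilon\hat{R}_{h}$, a joint function of the commuting families $hD_{x_{j}}$ ($j\le d$, nonresonant) and $Op_{h}^{W}(Q)$ (resonant), so joint diagonalization yields the spectrum. The $d$-torus produces the Bohr--Sommerfeld lattice $h\sum_{j=1}^{d}\omega_{j}(n_{y}^{j}+\vartheta_{j}/4)$, while the quadratic quantum Hamiltonian $\frac{\epsilon}{2}Op_{h}^{W}(Q)$ contributes $\frac{\epsilon}{2}\big(\sum_{j}\lambda_{j}(n_{u}^{j}+\tfrac12)+\sum_{j}\tilde{\lambda}_{j}(n_{v}^{j}+\tfrac12)\big)$ by the standard spectral theory of quadratic forms (symplectic/metaplectic reduction), with $\lambda_{j},\tilde{\lambda}_{j}$ the eigenvalues of $M_{p}(\omega)$. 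Adding the scalar term $\varepsilon(h,\epsilon)$ coming from $H_{0}^{*}$ and controlling the effect of $\epsilon\hat{R}_{h}$ on the eigenvalues by min--max yields the quantization formula with the stated remainder; the regularity statements (i)--(iii) follow by tracking the parameter dependence in the fixed-point construction. The admissible frequencies form the Cantor set $\Omega^{\epsilon}\subset\Omega_{0}$ of $\omega$ for which the iteration converges; summing the measures of the excluded resonant zones and invoking \eqref{ac} gives $|\Omega_{0}\setminus\Omega^{\epsilon}|\to 0$ as $\epsilon\to 0$, which is (iv), and $\omega(h,\epsilon)$ is selected inside $\Omega^{\epsilon}$. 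Finally, the eigenfunction attached to $E(n_{y},E_{u},E_{v},\epsilon,h)$ is $U_{h}(\psi_{n_{y}}\otimes\phi_{n_{u},n_{v}})$, where $\psi_{n_{y}}$ is a Lagrangian WKB state on the $d$-torus and $\phi_{n_{u},n_{v}}$ a product of Hermite functions in the elliptic (\emph{stable}) resonant directions, hence concentrated within $O(\sqrt{\epsilon h})$ of $\{u=v=0\}$; its semiclassical (Wigner) measure then charges exactly the $d$-dimensional invariant torus, which is the asserted scarring.

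\textbf{Main obstacle.} The core difficulty is to carry out Steps~1 and 2 \emph{jointly}: one must show that the classical Gevrey normalization, the formal $h$-expansion of the Moyal product, and the KAM small-divisor losses can be optimally truncated simultaneously and uniformly in the selected $\omega\in\Omega^{\epsilon}$, so that the composite operator remainder is genuinely of size $\exp(-ch^{\frac{1}{\alpha-1}})$. This is an exponentially accurate \emph{resonant} quantum Birkhoff normal form — in the spirit of Popov \cite{MR1770800} and Charles and V\~{u} Ng\d{o}c \cite{MR2423760}, but where the homological equation is only partially solvable and the resonant block must be retained as the operator $Op_{h}^{W}(Q)$ rather than removed; keeping the Gevrey estimates compatible with the symbol calculus throughout this partial normalization is where the main technical work lies.
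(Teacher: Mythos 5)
Your plan is conceptually sound and arrives at the same destination as the paper, but the organization differs in a way that matters technically. You propose a two-stage pipeline: first iterate the \emph{classical} Gevrey normal form to convergence (Step~1), then quantize the finished canonical map once by a single FIO built from exponentials $e^{i\widehat{\chi_j}/h}$ (Step~2). The paper instead interleaves these: it runs an $\epsilon$-order-by-order (not Newton/superexponential) iteration simultaneously at the classical and quantum levels, constructing at each step a unitary $U_p$ and forming $X_p=U_p\circ\cdots\circ U_1$, so that the quantum normal form \eqref{ae} is built inductively with remainder bound \eqref{bl}. The unitary operators themselves are produced not as exponentials of self-adjoint generators but by Popov's abstract conjugation scheme — $F_{1h}\circ F_{2h}\circ A_h$ followed by the polar correction $\tilde Q_h=(V_h^*V_h)^{-1/2}$ — and the proof of Theorem~\ref{bc} then just checks that the power series for $\omega_p$, $M_p$ and $\varepsilon_p$ in $\epsilon$ converge as $p\to\infty$ (using the Cauchy-type estimate for $\nabla_z$ and $\nabla_z^2$ combined with \eqref{bl}), after which the Bohr--Sommerfeld contributions of $h\langle\omega,D\rangle$ and the harmonic-oscillator eigenvalues of $\frac{\epsilon}{2}\hat M_p$ are read off exactly as you describe. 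The ``main obstacle'' you flag — balancing the classical Gevrey losses, the Moyal expansion, and the small-divisor divergence \emph{jointly} so the operator remainder is genuinely $\exp(-ch^{\frac{1}{\alpha-1}})$ — is precisely what the paper's interleaved quantum iteration is designed to sidestep: by solving the homological equation at the quantum level at every step, there is no separate ``Egorov with exponential precision'' lemma to prove for the full composite map, only a per-step estimate that is then summed. If you commit to a sequential classical-then-quantum scheme, you would indeed need a Gevrey-sharp Egorov theorem for the full composite canonical transformation, which is a substantial additional result; the paper avoids it. Your description of the scarring mechanism (Step~3, eigenfunctions as $U_h(\psi_{n_y}\otimes\phi_{n_u,n_v})$ with Hermite factors in the elliptic resonant directions) is consistent with Section~6, which runs the quasimode/positive-mass argument of Gomes--Hassell.
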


We investigate the spectral structure and eigenfunctions of general classes of pseudodifferential operators under both resonant and more general non-resonant conditions, extending beyond the classical Diophantine setting. Working within the Gevrey function space framework, we show that the spectral remainder is exponentially small, rather than of order $O(\epsilon h^{2})$. Moreover, based on our analysis, we establish that the associated eigenfunctions exhibit semiclassical scarring behavior, in a way distinct from the results in \cite{MR2164607}.

In the presence of partial resonances among the classical frequencies, the perturbed quantum Hamiltonian exhibits a characteristic spectral structure of the form:
\begin{align*}
E(n_{y},E_{u},E_{v},\epsilon,h)=\varepsilon(h,\epsilon)+h\sum_{j=1}^{d}\omega_{j}(n_{y}^{j}+\frac{\vartheta_{j}}{4})+\frac{\epsilon}{2}\bigg(\sum_{j=1}^{d_{0}}\lambda_{j}
  (n_{u}^{j}+\frac{1}{2})+\sum_{j=1}^{d_{0}}\tilde{\lambda}_{j}(n_{v}^{j}+\frac{1}{2})\bigg)+O(\epsilon\exp(-ch^{\frac{1}{\alpha-1}})).
\end{align*}
This spectral pattern reveals a superposition of a quasi-periodic structure in the non-resonant directions and a fine clustering of energy levels in the resonant ones. The quadratic terms in the resonant directions—surviving the unitary normal form reduction—give rise to this clustering effect, laying the groundwork for the emergence of quantum scarring, where eigenstates exhibit localization near lower-dimensional invariant tori. In particular, this structure enables the construction of localized quasimodes and aligns naturally with existing scarring theories, such as the general geometric conditions proposed by Gomes \cite{MR4404789} (or \cite{c}), thereby uncovering a deep connection between classical resonance geometry and the localization phenomena of quantum states.

\subsection{The outline of this paper}
In Section 2, we reduce a perturbed Hamiltonian function $H$ to the form of the normal form \eqref{bp} under the frequency conditions $(D)$. This process is similar to \cite{MR1025685}. In Section 3, we use KAM iteration to obtain the final normal form \eqref{au}. It is worth noting that this normal form increases with parameters $\epsilon$ one by one, which provides the basis for the proof in Section 5. In Section 4, we generalize the classical results in Section 2 to the quantum case, and also use the iterative procedure to obtain the quantum normal form of the perturbed quasi-differential operator $\hat{H}(\epsilon,h)$, i.e., the form \eqref{ae}. In Section 5, we also mainly give the estimation of the frequency set. In Section 6, we give the scarring behavior of quantum states on a $d$-dimensional torus in the semiclassical limit $h\rightarrow0$, with the help of the quasi-mode support of a pseudodifferential operator on the $d$-dimensional torus.

\section{Reduction of Normal form in the classical sense}
In this section, we consider a Gevrey Hamiltonian
\begin{align}\label{a}
H(x,y,\epsilon)=H_{0}(y)+\epsilon P_{0}(x,y).
\end{align}
$H_{0}$ is $g$-nondegenerate, that is, $H_{0}$ is nondegenerate and $\det K'^{T}\frac{\partial^{2}H_{0}}{\partial y^{2}}(y)K'\neq0$ for $y\in O(g,G)$ which is to be specialised below.
In what follows, we reduce the perturbed Hamiltonian of the form \eqref{a} to the following form:
\begin{align}
H_{1}(x,y,z,\epsilon)=\epsilon\mathcal{N}_{1}(0)+\langle\omega_{1},y\rangle+\frac{\epsilon}{2}\langle z,M_{1}z\rangle+\epsilon\mathcal{R}_{1}(y,z,\epsilon)+\epsilon P_{1}(x,y,z,\epsilon)
\end{align}
following from \cite{MR1730569} (or \cite{MR1025685}). For the sake of completeness, we briefly write the proof again.

For any given subgroup $g$,
\begin{align}\label{av}
O(g,G)=\{y\in G: \langle k,\omega(y)\rangle=0, k\in g\}
\end{align}
is a $d=l-d_{0}$ dimensional surface, which is called a $g$-resonant surface. By group theory, there are integer vectors $\tau'_{1},\cdots,\tau'_{d}\in \mathbb{Z}^{l}$ such that $\mathbb{Z}^{l}$ is generated by $\tau_{1},\cdots,\tau_{d_{0}},\tau'_{1},\cdots,\tau'_{d}$ and $\det K_{0}=1$, where $K_{0}=(K_{*},K'), K_{*}=(\tau'_{1},\cdots,\tau'_{d}),K'=(\tau_{1},\cdots,\tau_{d_{0}})$ are $l\times l, l\times d$ and $l\times d_{0}$, respectively.

For the subgroup $g$ of $\mathbb{Z}^{l}$, let
\begin{align*}
h_{0}(\phi,y)=\sum_{k\in g}P_{k}e^{i\langle k,x\rangle}=\sum_{l\in \mathbb{Z}^{d_{0}}}P_{K^{\prime}l}e^{i\langle l,\phi\rangle},
\end{align*}
where $\phi=K^{\prime T}x$. Clearly, $h_{0}$ has at least $d_{0}+1$ critical points on $T^{d_{0}}$. Moreover, there are at least $2^{d_{0}}$ critical points if all of them are nondegenerate (see \cite{MR163331}).

Let $\phi_{0}$ be a nondegenerated critical point of $h_{0}(\phi,y)$, i.e. $\frac{\partial h_{0}}{\partial \phi}(\phi_{0},y)=0$, and  $\frac{\partial^{2}h_{0}}{\partial y^{2}}(\phi,y)$ is nonsingular.

Let
\begin{align}
\Gamma=K_{0}^{T}\frac{\partial^{2}H_{0}}{\partial y^{2}}(y_{0})K_{0}=
\begin{pmatrix}
\Gamma_{11} & \Gamma_{12} \\
\Gamma_{21} & \Gamma_{22}
\end{pmatrix},
\end{align}
where $\Gamma_{11},\Gamma_{12},\Gamma_{21},\Gamma_{22}$ are $d\times d, d\times d_{0},d_{0}\times d, d_{0}\times d_{0}$ matrices, respectively, and $\Gamma_{12}=\Gamma_{21}^{T},\Gamma_{22}=K_{0}^{\prime T}\frac{\partial^{2}H_{0}}{\partial y^{2}}(y_{0})K_{0}^{\prime}(\equiv\hat{\Gamma})$. We expand the Hamiltonian $H(x,y,\epsilon)$ at any $y_{0}\in O(g,G)$, by Taylor's formula, into the following form:
\begin{align*}
H(x,y,\epsilon)=H_{0}(y_{0})+\langle\omega(y_{0}),y-y_{0}\rangle+\frac{1}{2}\langle\frac{\partial^{2}H_{0}}{\partial y^{2}}(y_{0})(y-y_{0}),y-y_{0}\rangle+\epsilon P_{0}(x,y)+O(|y-y_{0}|^{3})
\end{align*}
up to a constant. Then, we perform a symplectic coordinate transformation
\begin{equation}
\Phi_{g}: (\theta,Y)\rightarrow (x,y),
\end{equation}
where $y-y_{0}=K_{0}Y,\theta=K_{0}^{T}x$. Then
\begin{align*}
H(\theta,Y)&=\langle\omega^{*},Y^{\prime}\rangle+\frac{1}{2}\langle Y,\Gamma(y_{0})Y\rangle+\epsilon\bar{P_{0}}(\theta,Y)+O(|Y|^{3})\\
&=\langle\omega^{*},Y^{\prime}\rangle+\frac{1}{2}\langle Y^{\prime\prime},\Gamma_{22}Y^{\prime\prime}\rangle+\epsilon\bar{P_{0}}(\theta,Y)+O(|Y|^{3})+\frac{1}{2}\langle Y^{\prime},\Gamma_{11}Y^{\prime}\rangle+\frac{1}{2}\langle Y^{\prime\prime},\Gamma_{12}Y^{\prime}\rangle+\frac{1}{2}\langle Y^{\prime},\Gamma_{22}Y^{\prime\prime}\rangle,
\end{align*}
where $\omega^{*}=K_{*}^{T}\omega(y_{0}),Y^{\prime}=(Y_{1},\cdots,Y_{m})^{T},Y^{\prime\prime}=(Y_{m+1},\cdots,Y_{l})^{T}$,
\begin{align*}
\bar{P}(\theta,Y)=P((K^{T}_{0})^{-1}\theta,y_{0}+K_{0}Y).
\end{align*}
We can find canonical transformations $\Psi_{\epsilon}$ with generating functions S: $I\rightarrow Y=(Y',Y''), \theta\rightarrow\phi=(\phi',\phi'')$
\begin{align*}
S=\langle I,\phi\rangle+\epsilon \sum_{k\in \mathbb{Z}^{d}\setminus \{0\}}\frac{\sqrt{-1}h_{k}}{\langle\omega,k\rangle}(\phi'',\omega)e^{i\langle k,\phi'\rangle}
\end{align*}
with $h_{k}=\int_{0}^{2\pi}\bar{P}(\phi,0)e^{i\langle k,\phi'\rangle}{\rm d} \phi'$. Then
\begin{align}
& Y'=I'+\sqrt{-1}\epsilon\sum_{k\in \mathbb{Z}^{d}\setminus \{0\}}k\frac{\sqrt{-1}h_{k}}{\langle\omega,k\rangle}e^{i\langle k,\phi'\rangle},\\
& Y''=I''+O(\epsilon), \quad \theta=\phi.
\end{align}
For any given frequency $\omega\in \Omega(g,G)$, the following condition is satisfied
\begin{align*}
|\langle k,\omega\rangle|>\frac{\gamma}{\Delta(|k|)},\quad k\in \mathbb{Z}^{d}\setminus\{0\}
\end{align*}
for any $\gamma>0$, and some $\alpha-$approximation function $\Delta$, which is a continuous, strictly increasing, unbounded function $\Delta:[0,\infty)\rightarrow[1,\infty)$ and satisfies \eqref{ab},\eqref{ac}.  From \eqref{b} it follows that $S$ is a Gevrey function.  Under this transformation, the new Hamiltonian function reads as
\begin{align}\label{ad}
H(\phi,I)&=\langle\omega,I'\rangle+\frac{1}{2}\langle I'',\Gamma_{22}(\omega)I''\rangle+\epsilon h_{0}(\phi'',\omega)+\frac{1}{2}\langle I',\Gamma_{11}(\omega)I'\rangle+\frac{1}{2}\langle I',\Gamma_{12}(\omega)I''\rangle\\ \notag
&~~+\frac{1}{2}\langle I'',\Gamma_{21}(\omega)I'\rangle+O(\epsilon I)+O(|I|^{3}).
\end{align}
We have assumed that that $h_{0}(\phi'',\omega)$ has a nondegenerate critical point, denoted by $\phi''_{0}$. Without loss of generality, we assume $\phi''_{0}=0$ up to a linear coordinate transformation. \eqref{ad} is equivalent to the following:
\begin{align}
H(\phi,I)&=\langle\omega,I'\rangle+\frac{1}{2}\langle I'',\Gamma_{22}(\omega)I''\rangle+\frac{1}{2}\epsilon\langle\frac{\partial^{2}h_{0}}{\partial \phi^{2}}(0,\omega)\phi'',\phi'' \rangle+\frac{1}{2}\langle I',\Gamma_{11}(\omega)I'\rangle+\frac{1}{2}\langle I',\Gamma_{12}(\omega)I''\rangle\\ \notag
&~~+\frac{1}{2}\langle I'',\Gamma_{21}(\omega)I'\rangle+O(\epsilon I)+O(|I|^{3})+\epsilon O(|\phi''|^{3}).
\end{align}
We also take $I=\epsilon^{1/3}\bar{I}$ to reduce some less significant terms to a new perturbation and let
\[
z=(\bar{I}'',\phi''), M=
\begin{pmatrix}
U_{0}(\omega) & 0 \\
0 & V_{0}(\omega)
\end{pmatrix}.
\]
We replace $\bar{I}',\phi',\bar{I}'',\phi'', \epsilon^{\frac{1}{2}},\Gamma_{22}$ and $\frac{\partial^{2} h_{0}}{\partial \phi^{2}}(0,\omega)$ by $x,y,u,v,U_{0}(\omega)$ and $V_{0}(\omega)$, respectively, it arrives that
\begin{align*}
H(x,y,u,v,\epsilon)=\langle\omega,y\rangle+\frac{\epsilon}{2}\langle z,M(\omega)z\rangle+\epsilon P_{1}(x,y,u,v,\epsilon).
\end{align*}
Set
\begin{align}\label{at}
&\varepsilon_{1}=\mathcal{N}_{1}(0),\quad \omega_{1}(\epsilon)=\omega+\epsilon(\nabla_{y}\mathcal{N}_{1})(0),\quad M_{1}(\omega)=M(\omega)+\epsilon(\nabla_{z}^{2}\mathcal{N}_{1})(0),\\ \notag
&\mathcal{R}_{1}(y,z,\epsilon)=\mathcal{N}_{1}(y)-\mathcal{N}_{1}(0)-\langle (\nabla_{y}\mathcal{N}_{1})(0),y\rangle-\frac{\epsilon}{2}\langle z,(\nabla_{z}^{2}\mathcal{N}_{1})(0)z\rangle.
\end{align}
Finally, we get
\begin{align}\label{bp}
H_{1}(x,y,z,\epsilon)&=\epsilon\mathcal{N}_{1}(0)+\langle\omega_{1},y\rangle+\frac{\epsilon}{2}\langle z,M_{1}(\omega)z\rangle+\epsilon\mathcal{R}_{1}(y,z,\epsilon)+\epsilon P_{1}(x,y,z,\epsilon)\\ \notag
&=N+\epsilon\mathcal{R}_{1}(y,z,\epsilon)+\epsilon P_{1}(x,y,z,\epsilon),
\end{align}
where $z=(u,v)$ represents the variable in the resonance direction.
\section{The KAM Step}
The KAM iteration process consists of infinitely many KAM steps. From each cycle of KAM steps, one can find the constructions and estimates of the desired symplectic
changes and their domains, perturbed frequencies, and new perturbations.

We use $\Psi_{\epsilon}$ to denote the canonical flow generated by the Hamiltonian $F$ at time $\epsilon$. We define two families of transformations of the form:
\begin{align}\label{ai}
& \chi_{\epsilon}^{p}=\Psi_{\epsilon}^{1}\circ\Psi_{\epsilon}^{2}\cdots\circ\Psi_{\epsilon}^{p},\quad p=1,2,\cdots,\\
& \zeta_{\epsilon}^{s}=\Psi_{\epsilon}^{p}\circ\Psi_{\epsilon}^{p-1}\cdots\circ\Psi_{\epsilon}^{s}
\end{align}
such that after $p$ times iterations, we have the following
\begin{align}\label{au}
H_{p}=\varepsilon_{p}(\epsilon)+\langle\omega_{p},y\rangle+\frac{\epsilon}{2}\langle z,M_{p}(\omega)z\rangle+\epsilon\sum_{s=2}^{p}\Psi_{\epsilon}^{s}\circ\mathcal{R}_{s-1}\epsilon^{s-2}+\epsilon^{p}\mathcal{R}_{p}+\epsilon^{p} P_{p}(x,y,z,\epsilon),
\end{align}
where
\begin{align*}
& \varepsilon_{p}(\epsilon)=\sum_{s=1}^{p}\mathcal{N}_{s}(0)\epsilon^{s},\\
& \omega_{p}=\omega+\sum_{s=1}^{p}\omega_{s}\epsilon^{s},\ \omega_{s}=\nabla_{y}\mathcal{N}_{s}(0),\\
& M_{p}=M+\sum_{s=1}^{p}\nabla_{z}^{2}\mathcal{N}_{s}(0)\epsilon^{s}.
\end{align*}

\subsection{Linearized equation and the perturbation estimation}
We will look for $F\in\mathcal{F}_{\rho,\sigma}^{\alpha}$ such that
\begin{align}\label{an}
\{N,F\}+\epsilon\tilde{R}+\epsilon\langle P_{001},z\rangle=0,
\end{align}
where
\begin{align}\label{ao}
F=\sum_{0\neq|k|\leq K_{+}}(F_{k00}+\langle F_{k10},y\rangle+\langle F_{k01},z\rangle+\langle z,F_{k02}z\rangle)e^{i\langle k,x\rangle}+\langle F_{001},z\rangle.
\end{align}
Let $R$ be a cutoff of $P_{1}$ which depends on $\epsilon$, we assume that $R$ can be expanded as follows:
\begin{align}\label{as}
R=\sum_{|k|\leq K_{+}}(\epsilon P_{k00}+\langle\epsilon P_{k10},y\rangle+\langle P_{k01},z\rangle+\langle z,\epsilon P_{k02}z\rangle)e^{i\langle k,x\rangle}.
\end{align}
Then
\begin{align}\label{bo}
H_{1}\circ\Psi_{\epsilon}&=(N+\epsilon R)\circ\Psi_{\epsilon}+\epsilon(P_{1}-R)\circ\Psi_{\epsilon}+\epsilon\mathcal{R}_{1}\circ\Psi_{\epsilon}\\
&=N+\epsilon[R]-\epsilon\langle P_{001},z\rangle+\epsilon\int_{0}^{1}\{R_{t},F\}\circ\Psi_{\epsilon}{\rm d}t+\epsilon(P_{1}-R)\circ\Psi_{\epsilon}+\epsilon\mathcal{R}_{1}\circ\Psi_{\epsilon}\\
&=N_{+}+P_{+}+\epsilon\mathcal{R}_{1}\circ\Psi_{\epsilon},
\end{align}
where
\begin{align*}
&[R]=\int_{T^{d}}R(x,\cdot){\rm d}x,\\
&\tilde{R}=R-[R],\ R_{t}=(1-t)\epsilon([R]-R-\langle P_{001},z\rangle)+\epsilon R,\\
&N_{+}=N+\epsilon[R]-\epsilon\langle P_{001},z\rangle,\ P_{+}=\int_{0}^{1}\{R_{t},F\}\circ\Psi_{\epsilon}{\rm d}t+\epsilon(P_{1}-R)\circ\Psi_{\epsilon}.
\end{align*}
In what follows, we will solve equation \eqref{an}: Substituting equations \eqref{ao} and \eqref{as} into \eqref{an}, and comparing coefficients, we have
\begin{align*}
&i\langle k,\omega\rangle F_{k00}=\epsilon^{2} P_{k00},\\
&i\langle k,\omega\rangle F_{k10}=\epsilon^{2} P_{k10},\\
&\bigg[-i\langle k,\omega\rangle I_{2d_{0}}+ MJ\bigg]F_{k01}=-\epsilon P_{k01},\\
&\bigg[-i\langle k,\omega\rangle I_{4d_{0}^{2}}+ MJ\otimes I_{2d_{0}}+ I_{2d_{0}}\otimes(MJ)\bigg]F_{k02}=-\epsilon^{2} P_{k02},\\
& MF_{001}=-\epsilon P_{001}.
\end{align*}
To control the norm of $F$, we solve them on the set
\begin{align*}
\Omega_{+}=\bigg\{\omega\in\Omega:|\langle k,\omega\rangle|\geq\frac{\gamma}{\Delta(|k|)},\ |\det A_{1}|>\frac{\gamma^{2d_{0}}}{\Delta^{2d_{0}}(|k|)},\ |\det A_{2}|>\frac{\gamma^{4d^{2}_{0}}}{\Delta^{4d^{2}_{0}}(|k|)},\ \text{for}\ k\in \mathbb{Z}^{d}, 0<|k|\leq K_{+} \bigg\},
\end{align*}
where
\begin{align*}
&A_{1}=-i\langle k,\omega\rangle I_{2d_{0}}+ MJ,\\
&A_{2}=-i\langle k,\omega\rangle I_{4d_{0}^{2}}+ MJ\otimes I_{2d_{0}}+ I_{2d_{0}}\otimes(MJ).
\end{align*}
The solution to the above equations are estimated as follows:
\begin{align*}
&\|F_{k00}\|_{\sigma}^{\alpha}\leq C\epsilon^{2}\|P_{k00}\|_{\sigma}^{\alpha}\gamma^{-1}\Delta(|k|),\\
&\|F_{k10}\|_{\sigma}^{\alpha}\leq C\epsilon^{2}\|P_{k10}\|_{\sigma}^{\alpha}\gamma^{-1}\Delta(|k|),\\
&\|F_{k01}\|_{\sigma}^{\alpha}\leq C\epsilon\|P_{k01}\|_{\sigma}^{\alpha}\gamma^{-2d_{0}}\Delta^{2d_{0}}(|k|),\\
&\|F_{k02}\|_{\sigma}^{\alpha}\leq C\epsilon^{2}\|P_{k02}\|_{\sigma}^{\alpha}\gamma^{-4d^{2}_{0}}\Delta^{4d^{2}_{0}}(|k|),\\
&\|F_{001}\|_{\sigma}^{\alpha}\leq C\epsilon\|P_{001}\|_{\sigma}^{\alpha}
\end{align*}
in the sense of Gevrey function, where $\|F_{k02}\|$ is defined to be the maximum of the norm of the entries. By formula \eqref{ao}, the norm of $F$ is
\begin{align*}
\|F\|_{\rho-r,\sigma}^{\alpha}\leq C\sigma^{2}\|P\|_{\rho,\sigma}^{\alpha}e^{4d_{0}^{2}a T^{\frac{1}{\alpha}}}
\end{align*}
with $\eta=r$ and $T>0$ in Lemma \ref{ba}. Here, the constant $C$ may depend on $\epsilon$.
We mainly focus on the regularity in the $x$ direction. Assuming that the other Fourier coefficients satisfy certain estimates, these homology equations should be solvable.
Let $P_{kjq}$ be the Fourier coefficient of $P$, then we have
\begin{align*}
\|P_{kjq}\|_{\sigma}^{\alpha}\leq C\|P\|_{\rho,\sigma}^{\alpha}e^{-\rho|k|^{\frac{1}{\alpha}}},
\end{align*}
where $\alpha$ represents the Gevrey index of the function $P_{1}$ with respect to $x$ and $\rho>0$.

Next, we complete the last goal of this subsection.  We estimate the new perturbation term in the following:
\begin{align*}
P_{+}=\int_{0}^{1}\{R_{t},F\}\circ\Psi_{\epsilon}{\rm d}t+\epsilon(P_{1}-R)\circ\Psi_{\epsilon}.
\end{align*}

We estimate the norm of $\|P_{+}\|_{\rho-r,\sigma-s}^{\alpha}$ in the spirit of \cite{MR1730569} by dividing it into two parts:
Using Proposition \ref{bk}, we obtain
\begin{align*}
\bigg\|\int_{0}^{1}\{R_{t},F\}\circ\Psi_{\epsilon}{\rm d}t\bigg\|_{\rho-r,\sigma-s}^{\alpha}\leq \frac{C}{rs}\|R\|_{\rho,\sigma}^{\alpha}\|F\|_{\rho,\sigma}^{\alpha}\leq \frac{C}{rs}\|P\|_{\rho,\sigma}^{\alpha}\|F\|_{\rho,\sigma}^{\alpha}.
\end{align*}
Since $R$ is the truncation of $P_{1}$, we have
\begin{align*}
P_{1}-R&=\bigg(\sum_{|k|\geq K_{+}}+\sum_{|k|\leq K_{+},2|j|+|k|\geq3}\bigg)P_{kjq}e^{i\langle k,x\rangle}y^{j}z^{q}\\
&=I+II.
\end{align*}
We estimate $P_{1}-R$ in the space $\mathcal{F}_{\rho-r,\sigma-s}^{\alpha}$. First,
\begin{align*}
\|I\|_{\rho-r,\sigma-s}^{\alpha}\leq \sum_{|k|\geq K_{+}}\|P\|_{\rho,\sigma}^{\alpha}e^{-(\rho-(\rho-r))|k|^\frac{1}{\alpha}}\leq \|P\|_{\rho,\sigma}^{\alpha}\int_{K_{+}}^{\infty} \lambda^{d}e^{-\lambda^{\frac{1}{\alpha}}r}{\rm d}\lambda\leq C\|P\|_{\rho,\sigma}^{\alpha}
\end{align*}
with
\begin{align*}
\int_{K}^{\infty}\lambda^{d}e^{-\lambda^{\frac{1}{\alpha}}r}{\rm d}\lambda\leq C.
\end{align*}
In the same manner,
\begin{align*}
\|II\|_{\rho,\sigma}^{\alpha}\leq C\|P\|_{\rho,\sigma}^{\alpha},
\end{align*}
where $C$ is a constant which depends on $K_{+},\alpha,\sigma-s,\epsilon$.
From this, we deduce
\begin{align*}
\epsilon\|(P-R)\circ\Psi_{\epsilon}\|_{\rho-r,\sigma-s}^{\alpha}\leq C\|P\|_{\rho,\sigma}^{\alpha}.
\end{align*}
We conclude that
\begin{align*}
\|P_{+}\|_{\rho-r,\sigma-s}^{\alpha}\leq C\sigma^{2}\frac{1}{rs}\big(\|P\|_{\rho,\sigma}^{\alpha}\big)^{2}e^{4d_{0}^{2}a T^{\frac{1}{\alpha}}}+C\|P\|_{\rho,\sigma}^{\alpha},
\end{align*}
and $\|P_{+}\|_{\rho-r,\sigma-s}^{\alpha}$ is of at least order $O(\epsilon^{2})$.
Set
\begin{align*}
&\varepsilon_{2}=\varepsilon_{1}+\epsilon^{2}\mathcal{N}_{2}(0),\quad \omega_{2}(\epsilon)=\omega_{1}+\epsilon^{2}(\nabla_{y}\mathcal{N}_{2})(0),\quad M_{2}(\omega)=M_{1}(\omega)+\epsilon^{2}(\nabla_{z}^{2}\mathcal{N}_{1})(0),\\ \notag
&\mathcal{R}_{2}(y,z,\epsilon)=\mathcal{N}_{2}(y)-\mathcal{N}_{2}(0)-\langle (\nabla_{y}\mathcal{N}_{2})(0),y\rangle-\frac{\epsilon}{2}\langle z,(\nabla_{z}^{2}\mathcal{N}_{2})(0)z\rangle.
\end{align*}
Finally, we get
\begin{align*}
H_{2}(x,y,z,\epsilon)&=\varepsilon_{2}+\langle\omega_{2},y\rangle+\frac{\epsilon}{2}\langle z,M_{2}(\omega)z\rangle+\epsilon\mathcal{R}_{1}(y,z,\epsilon)\circ\Psi_{\epsilon}+\epsilon^{2} \mathcal{R}_{2}(y,z,\epsilon)+\epsilon^{2} P_{1}(x,y,z,\epsilon).
\end{align*}

\subsection{Iteration}
First, we set the parameters. Let $\epsilon,K,\gamma,\rho,\sigma$ and $\alpha$ keep fixed. Define, for $p\geq1$:
\begin{align*}
&\sigma_{p}:=\frac{\sigma}{4p^{2}},\quad s_{p}:=s_{p-1}-\sigma_{p},\quad \rho_{p}:=\frac{\rho}{4p^{2}},\quad r_{p}:=r_{p-1}-\rho_{p},\\
&\gamma_{p}:=\gamma_{\infty}=\gamma_{0}-C\gamma_{1},\quad K_{p}=pK,\quad \epsilon_{p}=\epsilon^{p}\|q_{p-1}\|_{r_{p-1},s_{p-1}}^{\alpha}.
\end{align*}
The initial values of the parameter sequences are chosen as follows:
\begin{align*}
\gamma_{0}:=\gamma;\ s\neq s_{0}:=\sigma,\ r\neq r_{0}:=\rho;\ \epsilon_{0}:=0.
\end{align*}
We always have
\begin{align*}
H_{p}(x,y,z,\epsilon)&=\varepsilon_{p}+\langle\omega_{p},y\rangle+\frac{\epsilon}{2}\langle z,M_{p}z\rangle+\epsilon\sum_{s=2}^{p}\Psi_{\epsilon}^{s}\circ\mathcal{R}_{s-1}\epsilon^{s-2}+\epsilon^{p}P_{p}(x,y,z,\epsilon)\\
&=N_{p}+\epsilon\sum_{s=2}^{p}\Psi_{\epsilon}^{s}\circ\mathcal{R}_{s-1}\epsilon^{s-2}+\epsilon^{p}P_{p}.
\end{align*}
We set
\begin{align*}
& \varepsilon_{p}(\epsilon)=\sum_{s=1}^{p}\mathcal{N}_{s}(0)\epsilon^{s},\\
& \omega_{p}=\omega+\sum_{s=1}^{p}\omega_{s}\epsilon^{s},\ \omega_{s}=\nabla_{y}\mathcal{N}_{s}(0),\\
& M_{p}=M+\sum_{s=1}^{p}\nabla_{z}^{2}\mathcal{N}_{s}(0)\epsilon^{s},
\end{align*}
and
\begin{align*}
\mathcal{R}_{p}(y,z,\epsilon)=\mathcal{N}_{p}(y)-\mathcal{N}_{p}(0)-\langle (\nabla_{y}\mathcal{N}_{p})(0),y\rangle-\frac{\epsilon}{2}\langle z,(\nabla_{z}^{2}\mathcal{N}_{p})(0)z\rangle.
\end{align*}
Then, we arrive that
\begin{align*}
H_{p}=\varepsilon_{p}(\epsilon)+\langle\omega_{p},y\rangle+\frac{\epsilon}{2}\langle z,M_{p}(\omega)z\rangle+\epsilon\sum_{s=2}^{p}\Psi_{\epsilon}^{s}\circ\mathcal{R}_{s-1}\epsilon^{s-2}+\epsilon^{p}\mathcal{R}_{p}(y,z,\epsilon)+\epsilon^{p} P_{p}(x,y,z,\epsilon).
\end{align*}

By iteration, we can get
\begin{align*}
\|P_{p}\|_{r_{p},s_{p}}^{\alpha}\leq C\bigg(\sigma^{2}\bigg(\frac{4p^{2}}{\rho}\bigg)^{p}\bigg(\frac{4p^{2}}{\sigma}\bigg)^{p}\big(\|P\|_{r,s}^{\alpha}\big)^{2p}e^{4d_{0}^{2}a T^{\frac{1}{\alpha}}}+\big(\|P\|_{r,s}^{\alpha}\big)^{p}\bigg).
\end{align*}

\section{Quantum}
We then generalize this result to quantum. First, we quantize the symplectic maps $\Phi_{g},\Psi_{\epsilon}$. Following from Chap11.2 in \cite{MR2952218}, we can see that the symplectic map can be quantized, that is, we can get a unitary quasidifferential operator $F_{1h}$, such that
\begin{align*}
F_{1h}^{-1}\circ \hat{H}(\epsilon,h)\circ F_{1h}
\end{align*}
is an $h$-quasi-differential operator.
We can write $F_{1h}=A_{h}+h B_{h}$ by Popov \cite{MR1770800}, where the principal symbol of $A_{h}$ is equal to 1, and then we solve a linear equation for the real part of the principal symbol of $B_{h}$. Similarly, we can construct another $h$-quasidifferential operator $F_{2h}$, whose  distribution kernel has the following form
\begin{align*}
(2\pi h)^{-n}\int e^{{\rm i} S(I,\phi)/h} b(x,I;h){\rm d}I.
\end{align*}
Here, the function $S(I,\phi)$ is the generating function of the canonical transformation $\Psi_{\epsilon}$. Set $F_{h}(\epsilon)=F_{1h}(\epsilon)\circ F_{2h}(\epsilon)$, we have:
\begin{lemma}
The operator $F^{*}_{h}\circ\hat{H}(\epsilon,h)\circ F_{h}$ is an h-PDO with a symbol in the function space $\mathcal{F}_{\rho,\sigma}^{\alpha}$. Moreover, the principal symbol of $F^{*}_{h}\circ\hat{H}(\epsilon,h)\circ F_{h}$ equals to $(H\circ\Phi_{g})\circ\Psi_{\epsilon}$ and the subprincipal symbol is 0.
\end{lemma}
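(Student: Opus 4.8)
The plan is to prove the lemma by a Gevrey-class version of Egorov's theorem, carried out in two stages corresponding to the two factors $F_{h}(\epsilon)=F_{1h}(\epsilon)\circ F_{2h}(\epsilon)$. First I would record the structure of the two unitary conjugators. The map $\Phi_{g}$ is linear symplectic composed with the affine shift coming from the Taylor expansion at $y_{0}$, so $F_{1h}$ is, up to that shift, a metaplectic operator; writing $F_{1h}=A_{h}+hB_{h}$ as in the text, one knows explicitly how it acts on $h$-PDOs: conjugation by $A_{h}$ pulls symbols back by $\Phi_{g}$ at the level of principal symbols, and the $hB_{h}$ term is available to adjust the next order. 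The map $\Psi_{\epsilon}$ is the time-$\epsilon$ flow of $F$, near-identity, with generating function $S$; its quantization $F_{2h}$ has the FIO kernel $(2\pi h)^{-l}\int e^{iS(I,\phi)/h}b(x,I;h)\,{\rm d}I$ displayed in the excerpt, with $b(\cdot;h)=1+hb_{1}+\cdots$. Both are unitary on $L^{2}(\mathbb{R}^{l})$ modulo $O(h^{\infty})$, and can be corrected to be exactly unitary, since $\Phi_{g},\Psi_{\epsilon}$ are genuine symplectomorphisms; this is the content of Chapter 11.2 of \cite{MR2952218} together with Popov's construction in \cite{MR1770800}.

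Second, the principal-symbol statement. By the composition calculus for $h$-FIOs, $F_{h}^{*}\hat{H}(\epsilon,h)F_{h}$ is again an $h$-PDO, and by Egorov's theorem its principal symbol is the pullback of the principal symbol $H$ of $\hat{H}(\epsilon,h)$ under the canonical transformation generated by $F_{h}$, which is $\Phi_{g}$ followed by $\Psi_{\epsilon}$; hence the principal symbol equals $(H\circ\Phi_{g})\circ\Psi_{\epsilon}$. For the subprincipal symbol I would use the transformation law: under conjugation by an $h$-FIO quantizing a canonical map $\kappa$ with amplitude normalized to leading order $1$, the subprincipal symbol transforms as $\mathrm{sub}(F_{h}^{*}\hat{H}F_{h})=(\mathrm{sub}\,\hat{H})\circ\kappa$ plus a correction determined by the subleading amplitude (the real part of the principal symbol of $B_{h}$, respectively $b_{1}$). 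Assumption (B) gives $\mathrm{sub}\,\hat{H}=0$, and — exactly as the excerpt indicates — the real part of the principal symbol of $B_{h}$ and the correction $b_{1}$ are fixed by solving the associated real, linear, first-order transport equations so that the induced subprincipal contribution cancels; solvability holds because the transport is along the $\omega$-direction on the resonant surface $O(g,G)$ and the Diophantine-type condition \eqref{b} guarantees a Gevrey solution, just as it did for $S$. Hence $\mathrm{sub}(F_{h}^{*}\hat{H}F_{h})=0$.

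Third, and this is where the real work lies, I would prove that the full symbol of $F_{h}^{*}\hat{H}(\epsilon,h)F_{h}$ belongs to $\mathcal{F}_{\rho,\sigma}^{\alpha}$. The ingredients are: (a) the symbol $H=H_{0}+\epsilon P_{0}$ has $P_{0}$ holomorphic and decaying at infinity, so $\tilde{P}_{0}\in\Phi_{\rho,\sigma}^{\alpha}$ by Assumption (C); (b) $S$ is Gevrey-$\alpha$ in $(\phi,I)$, which is precisely what \eqref{b}--\eqref{ac} were arranged to give, since the small divisors $\langle k,\omega\rangle^{-1}$ grow only like $\Delta(|k|)$ and \eqref{ab} keeps the Gevrey weight $e^{\rho|k|^{1/\alpha}}$ dominant; (c) a Gevrey-class stationary-phase/oscillatory-integral estimate controlling the kernel $\int e^{iS/h}b\,{\rm d}I$ and its composition with $\hat{H}$, so that the resulting amplitude stays in a Gevrey class with exponential weights $e^{\rho'|k|^{1/\alpha}}$, $e^{\sigma'|s|^{1/\alpha}}$; (d) closure of $\mathcal{F}_{\rho,\sigma}^{\alpha}$ under the Moyal product — the Poisson-bracket estimate is Proposition \ref{bk}, and the analogous estimate for the full $\star_{h}$-product is obtained by the same Cauchy-estimate/geometric-series argument, Borel-resumming the Gevrey-$\alpha$ series at the cost $O(\exp(-ch^{-1/(\alpha-1)}))$. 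The main obstacle is exactly the bookkeeping in (c)--(d): one must track the successive losses in the Gevrey radii through the oscillatory integrals and the asymptotic summation and verify that, after all reductions, the symbol still lies in $\mathcal{F}_{\rho,\sigma}^{\alpha}$ — shrinking $\rho,\sigma$ by an arbitrarily small amount if needed — while the remainder is exponentially small of the stated Gevrey type. This is the quantum analogue of the classical estimates of Section 3 and of the scheme in \cite{MR1730569,MR2423760}.
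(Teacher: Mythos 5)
Your proposal reproduces exactly the argument the paper relies on for this lemma: the paper states it without an explicit proof, deferring to the construction of $F_{1h}=A_{h}+hB_{h}$ and $F_{2h}$ in the preceding paragraph and to Zworski Ch.~11.2 \cite{MR2952218} and Popov \cite{MR1770800}. Your three-part expansion — Egorov's theorem for the principal symbol, Popov's transport correction for the subprincipal symbol, and the Gevrey/Moyal bookkeeping for membership in $\mathcal{F}_{\rho,\sigma}^{\alpha}$ — is a correct and faithful fleshing-out of what the paper leaves implicit.
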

Next, we construct the unitary operator $U_{h}$. Suppose that $A_{h}(\epsilon)$ is an elliptic pseudodifferential operator with a symbol $a$, and the principal symbol $a_{0}=1$. Set $W_{h}(\epsilon)=F_{h}(\epsilon)\circ A_{h}(\epsilon)$, where $F_{h}(\epsilon)=F_{1h}(\epsilon)\circ F_{2h}(\epsilon)$. Using this construction, we obtain the relation:
\begin{align*}
\hat{H}(\epsilon,h)\circ V_{h}(\epsilon)=V_{h}(\epsilon)\circ(P_{h}^{0}(\epsilon)+R_{h}(\epsilon)),
\end{align*}
where $P_{h}^{0}(\epsilon)$ and $R_{h}(\epsilon)$ have the desired properties. However, $V_{h}(\epsilon)$ may not be a unitary operator. To resolve this, we set $\tilde{Q}_{h}(\epsilon)=(V_{h}^{*}(\epsilon)\circ V_{h}(\epsilon))^{-1/2}$ with a symbol $q(\varphi,I;h,\epsilon)$. Now, by setting $U_{h}(\epsilon)=V_{h}(\epsilon)\circ \tilde{Q}_{h}(\epsilon)$, we construct the desired unitary operator. This operator $U_{h}(\epsilon)$ fulfills the necessary properties for our analysis.

Through the above construction, we can get a unitary operator $U_{h}$ such that
\begin{align*}
U_{h}\circ \hat{H}(\epsilon,h)\circ U_{h}^{-1}&=\varepsilon_{1}(h;\epsilon)I_{d}+P_{0}(h,\omega_{1}(\epsilon))+\frac{\epsilon}{2}\hat{M_{1}}(\omega_{1};\epsilon)+\epsilon R_{1}(h;\epsilon)+\epsilon \tilde{P}_{1}(\epsilon,h)\\
&\triangleq P^{0}=\mathcal{K}^{0}+ \epsilon \tilde{P}_{1}(\epsilon,h).
\end{align*}
Clearly, $\mathcal{K}^{0}$ is an $h-$differential operator whose Weyl symbol $K^{0}$ admits a full asymptotic expansion in $h$. Suppose that $a, p$, and $p^{0}$ denote the symbols of operators $A_{h}, \hat{H}$, and $P^{0}$  respectively.
\begin{theorem}\label{bm}
There exist symbols $a$ and $p^{0}$ given by
\begin{align*}
a\sim \sum_{j=0}^{\infty}a_{j}h^{j},\quad p^{0}\sim \sum_{j=0}^{\infty}p_{j}^{0}h^{j},
\end{align*}
with $a_{0}=1,p_{0}^{0}=K_{0},p_{1}^{0}=0,$ so that
\begin{align*}
p\circ a-a\circ p^{0}\sim 0,
\end{align*}
where $K_{0}$ is the principal symbol of $\mathcal{K}^{0}$.
\end{theorem}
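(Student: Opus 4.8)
The plan is to read $p\circ a-a\circ p^{0}\sim 0$ (with $\circ$ the Weyl--Moyal symbol product) as the symbol form of the operator intertwining $\hat H(\epsilon,h)\circ W_{h}=W_{h}\circ P^{0}$, $W_{h}=F_{h}\circ A_{h}$. After the Fourier integral conjugation by $F_{h}=F_{1h}\circ F_{2h}$ the operator $F_{h}^{*}\hat H F_{h}$ has principal symbol $K_{0}=(H\circ\Phi_{g})\circ\Psi_{\epsilon}$ and, by $(B)$ together with the preceding lemma, vanishing subprincipal symbol; so I would work throughout with a symbol $p\sim\sum_{j}p_{j}h^{j}$ for which $p_{0}=K_{0}$ and $p_{1}=0$. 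Then I substitute the two asymptotic series into the Moyal expansion $f\circ g\sim\sum_{m\ge0}\frac{1}{m!}\big(\frac{h}{2i}\big)^{m}B_{m}(f,g)$, with $B_{0}(f,g)=fg$ and $B_{1}(f,g)=\{f,g\}$, and match coefficients of $h^{n}$, solving the resulting chain of equations recursively for the pairs $(a_{n-1},p_{n}^{0})$.

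At order $h^{0}$: $p_{0}a_{0}=a_{0}p_{0}^{0}$ together with $a_{0}=1$ forces $p_{0}^{0}=p_{0}=K_{0}$. At order $h^{1}$: since $a_{0}\equiv1$ kills all its brackets $B_{m}(\cdot,a_{0})$, $m\ge1$, and the multiplicative contributions of $a_{1}$ cancel because $p_{0}=p_{0}^{0}$, what survives is $p_{1}=p_{1}^{0}$, hence $p_{1}^{0}=0$; thus the two normalisations asserted in the theorem are in fact \emph{forced}, not chosen. The structural fact that makes the recursion go is that this cancellation persists: since $p_{0}=p_{0}^{0}=K_{0}$, the terms $B_{0}(p_{0},a_{n})$ and $B_{0}(a_{n},p_{0}^{0})$ coincide, so $a_{n}$ never enters the equation at order $h^{n}$ but only at order $h^{n+1}$, and there only through $\frac{1}{2i}\big(\{p_{0},a_{n}\}-\{a_{n},p_{0}^{0}\}\big)=\frac{1}{i}\{K_{0},a_{n}\}$. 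Consequently the equation at order $h^{n}$ ($n\ge2$) has the shape
\begin{align*}
\frac{1}{i}\{K_{0},a_{n-1}\}+p_{n}^{0}=\Psi_{n},
\end{align*}
where $\Psi_{n}$ is a fully determined symbol assembled from $a_{0},\dots,a_{n-2}$, $p_{1},\dots,p_{n}$ and $p_{1}^{0},\dots,p_{n-1}^{0}$ via finitely many $B_{m}$'s.

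To close the recursion I would solve this cohomological equation by the method of Section~3. Since $K_{0}$ is in the normal form $\langle\omega,y\rangle+\frac{\epsilon}{2}\langle z,Mz\rangle+O(\epsilon I)+O(|I|^{3})$, I expand $a_{n-1}$ and $\Psi_{n}$ in Fourier modes in $x\in\mathbb{T}^{d}$ (and, where needed, in powers of $y,z$ as in \eqref{ao}); for each $k\neq0$ the coefficient equation is inverted using the small divisor $\langle k,\omega\rangle$ from \eqref{b} and the determinant lower bounds on $A_{1}=-i\langle k,\omega\rangle I+MJ$ and $A_{2}=-i\langle k,\omega\rangle I+MJ\otimes I+I\otimes(MJ)$, with the loss controlled by the $\alpha$-approximation function $\Delta$; for $k=0$ I choose $p_{n}^{0}$ to be the $X_{K_{0}}$-averaged (resonant) part of the zero mode of $\Psi_{n}$, which lies in the kernel of $a\mapsto\{K_{0},a\}$, and invert the remainder. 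Proposition~\ref{bk} together with the Gevrey estimates of Section~3 then place $a_{n-1},p_{n}^{0}$ in $\mathcal{F}^{\alpha}_{\rho',\sigma'}$ for any $\rho'<\rho$, $\sigma'<\sigma$; since the theorem asks only for the formal asymptotic series, no summation over $n$ is required here (the Gevrey-type bounds on the growth of $a_{n},p_{n}^{0}$ in $n$, needed later for the $O(\epsilon\exp(-ch^{1/(\alpha-1)}))$ remainder, are deferred).

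The step I expect to be the main obstacle is the solvability of the $k=0$ cohomological equation in the resonant variables $z=(u,v)$: there the generator $\frac{\epsilon}{2}\langle z,Mz\rangle$ produces the operator $\{K_{0},\cdot\}$ whose action on polynomials in $z$ is conjugate to a first-order action by $MJ$, with eigenvalues $\pm\lambda_{j},\pm\tilde\lambda_{j}$, and one must verify that the only obstruction to inversion lies in the finite-dimensional resonant subspace that can be absorbed into $p_{n}^{0}$ — this is exactly what the determinant conditions cutting out $\Omega_{+}$ in Section~3 are tailored to supply. A secondary, purely bookkeeping, point is to check that $\Psi_{n}$ depends only on data already constructed; this is ensured by the cancellation $p_{0}=p_{0}^{0}$ noted above, which is also the reason for the index shift between $p_{1}^{0}=0$ and $p_{1}=0$.
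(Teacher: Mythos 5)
Your proposal is correct and follows the route the paper intends: Moyal-expand $p\circ a - a\circ p^{0}$, match coefficients of $h^{n}$, and invert the resulting cohomological equations $\frac{1}{i}\{K_{0},a_{n-1}\}+p_{n}^{0}=\Psi_{n}$ using the small-divisor bound \eqref{b} for $k\neq 0$ and the determinant lower bounds on $A_{1},A_{2}$ (defining $\Omega_{+}$ in Section~3) in the resonant $z$-variables, with $p_{n}^{0}$ absorbing the $k=0$ resonant part. The paper itself offers no argument for Theorem~\ref{bm} beyond the sentence pointing to reference \cite{a}, so your sketch is, in effect, the proof the paper is relying on. Two of your observations are the load-bearing ones and are worth keeping explicit: first, that $p$ must be read as the symbol of $F_{h}^{*}\hat{H}F_{h}$ (principal symbol $K_{0}$, vanishing subprincipal symbol by the Lemma preceding the theorem) rather than of $\hat{H}$ itself — this is the only reading under which $a_{0}=1$ and the order-$h^{0}$, $h^{1}$ equations are consistent with $p_{0}^{0}=K_{0}$, $p_{1}^{0}=0$; and second, that the identity $p_{0}=p_{0}^{0}=K_{0}$ cancels the $B_{0}(p_{0},a_{n})-B_{0}(a_{n},p_{0}^{0})$ contribution at order $h^{n}$ and defers $a_{n}$ to order $h^{n+1}$ through the bracket $\frac{1}{i}\{K_{0},a_{n}\}$, so the recursion determines the pair $(a_{n-1},p_{n}^{0})$ at each step, as you say.
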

The composite operation of the quasi-differential operator is transformed into a symbolic operation. We only need to prove that Theorem \ref{bm} holds. We can solve the homological equations under non-resonance condition \eqref{b}. The specific process can be found in reference \cite{a}.

For the remainder, we get the error term $\tilde{P}_{1}$ to be exponentially small. This part of the expressions do not explicitly contain the variable $z$ and $x$. On the one hand, the quantization of the classical action variable $y$ is $I_{n}=h(n+\vartheta/4),n\in \mathbb{Z}^{l}$ and $\vartheta$ is the Maslov class of the invariant tori.
On the other hand, for a given set of action variables $E_{\gamma}(\epsilon)=\omega^{-1}(\Omega_{0},\epsilon)$, we expand $\tilde{P}(I_{m},\cdot,\epsilon,h)$ in Taylor series at some $I_{0}\in E_{\gamma}(\epsilon)$, which is a $d$-dimensional variable such that $|I_{0}-I_{m}|=|E_{\gamma}(\epsilon)-I_{m}|=\inf_{I^{\prime}\in E_{\gamma}(\epsilon)}|I^{\prime}-I_{m}|$,  where $|E_{\gamma}(\epsilon)-I_{m}|=\inf_{I^{\prime}\in E_{\gamma}(\epsilon)}|I^{\prime}-I_{m}|$ is the distance to the compact set $E_{\gamma}(\epsilon)$. We get for any $n\in \mathbb{Z}_{+}$,
\begin{align}\label{ay}
|\tilde{P}_{1}(I,\cdot,\epsilon,h)|\leq C^{n+1}n!^{\alpha-1}|I_{0}-I_{m}|^{n}
\end{align}
for all $(I,\cdot,\epsilon,h)\in D\times \mathbb{T}^{d},\ I_{m} \notin E_{\gamma}(\epsilon),\ h\in(0,h_{0}],\ \epsilon\in(0,\epsilon^{*}]$. Using Stirling's formula, we may minimize the  right-hand side with respect to $n\in \mathbb{Z}_{+}$. An optimal choice for $n$ will be
\begin{align}\label{ax}
n\sim (-C^{-1}|I_{0}-I_{m}|)^{-\frac{1}{\alpha-1}},
\end{align}
such that
\begin{align*}
|\tilde{P}_{1}(I,\cdot,\epsilon,h)|\leq C\exp(-C^{-1}|I_{0}-I_{m}|^{-\frac{1}{\alpha-1}})
\end{align*}
for every $\alpha,\ \beta\in\mathbb{Z}_{+}^{d}$ and $(I,\cdot,\epsilon,h)\in D\times\mathbb{T}^{d}\times(0,\epsilon^{*}]\times(0,h_{0}],I_{m}\notin E_{\gamma}(\epsilon)$.
From the definition of the index set $\mathcal{M}_{h}(\epsilon)$ given in equation \eqref{bb}, we can derive that
\begin{align*}
|\tilde{P}_{1}(I,\cdot,\epsilon,h)|\leq C\exp(-ch^{-\frac{1}{\alpha-1}}),
\end{align*}
where the constant $c$ depends on $C^{-1}, L,\alpha$.
Here, we explain why the above form \eqref{ax} is the optimal choice for $n$. The terms on the right side of inequality \eqref{ay} that involves $n$ are as follows:
\begin{align*}
f(n)\triangleq C^{n}n!^{\alpha-1}|I_{0}-I_{m}|^{n}.
\end{align*}
Taking the logarithm of both sides, we have
\begin{align*}
\log f(n)= n\log C+(\alpha-1)\log n!+n\log|I_{0}-I_{m}|.
\end{align*}
Using the logarithmic form of Stirling's approximation, that is,
\begin{align*}
\log n!\sim n\log n-n,
\end{align*}
one obtains
\begin{align*}
\log f(n)= n(\log C+\log|I_{0}-I_{m}|-\alpha+1)+(\alpha-1) n\log n.
\end{align*}
To minimize $\log f(n)$, take the derivative with respect to $n$ and set the derivative to zero.
Finally, since $\tilde{P}_{1}$ has an exponentially small tail estimate on $I$, and the Fourier transform preserves (in some sense) this exponential decay, we know that the decay of $\hat{\tilde{P}}_{1}(s)$ also satisfies
\begin{align*}
|\hat{\tilde{\tilde{P}}}^{0}_{1}(s)|\leq C\exp(-\rho|k|^{\frac{1}{\alpha}}-\sigma|s|^{\frac{1}{\alpha}}-ch^{\frac{1}{\alpha-1}}),
\end{align*}
hence
\begin{align*}
\|\tilde{P}_{1}\|_{\rho,\sigma}^{\alpha}\leq C\exp(-ch^{\frac{1}{\alpha-1}})
\end{align*}
by \eqref{aw}.
We can also iterate the above process. We have the following result:
\begin{proposition}
Let $\omega\in\Omega_{0}$. There exist $epsilon^{*}>0$ and $\forall p\geq1$, a closes set $\Omega_{p}^{\gamma}\subset\Omega_{0}$ such that, if $|\epsilon|<\epsilon^{*}$ and $\omega_{p}\in\Omega_{p}^{\gamma}$, then the followings hold:
\begin{itemize}
  \item [1.]  We can construct two sequences of unitary transformations $\{X_{p}\},\{Y_{p}\}$ in $L^{2}(\mathbb{R}^{l})$ with the property
  \begin{align}\label{ae}
  X_{p}\hat{H}(\epsilon,h)X_{p}=\varepsilon_{p}(h;\epsilon)I_{d}+P_{0}(\omega_{p}(h;\epsilon))+\frac{\epsilon}{2}\hat{M_{p}}(\omega_{p};\epsilon)
  +\epsilon\sum_{s=2}^{p}Y_{s}R_{s-1}(h)Y_{s}^{-1}\epsilon^{s-2}+\epsilon^{p}R_{p}(h;\epsilon)+\epsilon^{p}\tilde{P}_{p},
  \end{align}
  where
  \begin{align*}
  \frac{\epsilon}{2}\hat{M_{p}}(\omega_{p};\epsilon)=\frac{\epsilon}{2}\sum_{j,k}\bigg((U_{0})_{jk}(\omega)Op_{h}^{w}(u_{j}u_{k})+(V_{0})_{jk}(\omega)Op_{h}^{w}(v_{j}v_{k})\bigg);
  \end{align*}
  \item [2.] $X_{p}$ and $Y_{p}$ have the following forms:
  \begin{align}\label{af}
  X_{p}=U_{p}\circ U_{p-1}\cdots\circ U_{1},\\
  Y_{s}=U_{p}\circ U_{p-1}\cdots\circ U_{s};
  \end{align}
  \item [3.] The remainder satisfies the estimate:
 \begin{align}\label{bl}
 \|\tilde{P}_{p}\|_{\rho,\sigma}^{\alpha}\leq C\exp(-cph^{\frac{1}{\alpha-1}}).
 \end{align}
\end{itemize}
\end{proposition}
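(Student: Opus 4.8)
The plan is to prove the Proposition by induction on $p$, transplanting the classical KAM scheme of Section~3 to the level of semiclassical $h$-pseudodifferential operators, exactly as the construction of $U_{h}$ above transplanted the classical reduction of Section~2. The case $p=1$ is already in hand: with $X_{1}=Y_{1}=U_{h}$, the identity $U_{h}\hat{H}(\epsilon,h)U_{h}^{-1}=\mathcal{K}^{0}+\epsilon\tilde{P}_{1}$ is the content of \eqref{ae} for $p=1$, Theorem~\ref{bm} furnishes the asymptotic expansions of the symbols, and the Taylor expansion of $\tilde{P}_{1}$ around the compact set $E_{\gamma}(\epsilon)$ combined with Stirling's formula gives $\|\tilde{P}_{1}\|_{\rho,\sigma}^{\alpha}\le C\exp(-ch^{\frac{1}{\alpha-1}})$, which is \eqref{bl} for $p=1$. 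Assume now \eqref{ae}--\eqref{bl} at level $p$, with $X_{p}=U_{p}\circ\cdots\circ U_{1}$, $Y_{s}=U_{p}\circ\cdots\circ U_{s}$, and remainder $\epsilon^{p}\tilde{P}_{p}$; it suffices to produce one further unitary $U_{p+1}$.

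First I would quantize the $(p+1)$-st classical KAM transformation $\Psi_{\epsilon}$ of Section~3, whose generating function $F$ of the form \eqref{ao} solves the homological equation \eqref{an}. Following Chapter~11 of \cite{MR2952218} and Popov \cite{MR1770800}, this yields a unitary $h$-PDO $U_{p+1}$ acting on symbols, to leading order, by composition with $\Psi_{\epsilon}$ (Egorov's theorem), the correction being governed by the Moyal product. Conjugating the operator on the right-hand side of \eqref{ae} by $U_{p+1}$ leaves, as obstruction to normal form, precisely the semiclassical analogue of \eqref{an}, whose solvability reduces to the same small-divisor conditions by the argument behind Theorem~\ref{bm}. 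Solving on the closed set
\[
\Omega_{p+1}^{\gamma}:=\Omega_{p}^{\gamma}\cap\Bigl\{\omega:\ |\langle k,\omega\rangle|\ge\tfrac{\gamma}{\Delta(|k|)},\ |\det A_{1}|>\tfrac{\gamma^{2d_{0}}}{\Delta^{2d_{0}}(|k|)},\ |\det A_{2}|>\tfrac{\gamma^{4d_{0}^{2}}}{\Delta^{4d_{0}^{2}}(|k|)},\ 0<|k|\le K_{p+1}\Bigr\},
\]
the entry-wise bounds on $F_{k00},F_{k10},F_{k01},F_{k02},F_{001}$ give $\|F\|_{\rho_{p+1},\sigma}^{\alpha}\le C\sigma^{2}\|\tilde{P}_{p}\|_{\rho,\sigma}^{\alpha}e^{4d_{0}^{2}aT^{1/\alpha}}$, so $U_{p+1}$ is well defined and $U_{p+1}-\mathrm{Id}=O(\|\tilde{P}_{p}\|_{\rho,\sigma}^{\alpha})$.

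Writing $R$ for the truncation \eqref{as} of $\tilde{P}_{p}$, the transformed operator has remainder symbol $\int_{0}^{1}\{R_{t},F\}\circ\Psi_{\epsilon}\,{\rm d}t+(\tilde{P}_{p}-R)\circ\Psi_{\epsilon}$, the normal-form part being absorbed into $\varepsilon_{p+1},\omega_{p+1},M_{p+1}$ through the formulas of \eqref{at} and into the telescoping sum $\epsilon\sum_{s=2}^{p+1}Y_{s}R_{s-1}(h)Y_{s}^{-1}\epsilon^{s-2}$. Proposition~\ref{bk} bounds the bracket term by $\tfrac{C}{r_{p+1}s_{p+1}}\|\tilde{P}_{p}\|_{\rho,\sigma}^{\alpha}\|F\|_{\rho,\sigma}^{\alpha}$, and the Fourier-tail estimate $\|P_{kjq}\|_{\sigma}^{\alpha}\le C\|\tilde{P}_{p}\|_{\rho,\sigma}^{\alpha}e^{-\rho|k|^{1/\alpha}}$ controls $\tilde{P}_{p}-R$ by the high-frequency tail. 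Running the classical iteration estimate of Section~3 --- which delivers $\|P_{p}\|\sim(\|P_{1}\|)^{p}$ up to polynomial-in-$p$ and Gevrey/small-divisor prefactors --- with the exponentially small input $\|\tilde{P}_{1}\|_{\rho,\sigma}^{\alpha}\le C\exp(-ch^{\frac{1}{\alpha-1}})$ and using $(\exp(-ch^{\frac{1}{\alpha-1}}))^{p}=\exp(-cph^{\frac{1}{\alpha-1}})$ then gives $\|\tilde{P}_{p+1}\|_{\rho,\sigma}^{\alpha}\le C\exp(-c(p+1)h^{\frac{1}{\alpha-1}})$ after absorbing the prefactors into a slightly smaller $c$; this is \eqref{bl} at level $p+1$. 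Putting $X_{p+1}=U_{p+1}\circ X_{p}$ and $Y_{s}=U_{p+1}\circ\cdots\circ U_{s}$ gives \eqref{af} and closes the induction, while the $\Omega_{p}^{\gamma}$ are closed, nested, and satisfy $|\Omega_{0}\setminus\bigcap_{p}\Omega_{p}^{\gamma}|\to0$ as $\gamma\to0$ by the standard measure estimate based on the convergence in \eqref{ac}.

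The main obstacle is the bookkeeping of the $h$-symbol calculus at each step: one must verify that conjugation by $U_{p+1}$ keeps the symbol in $\mathcal{F}_{\rho_{p+1},\sigma_{p+1}}^{\alpha}$ with only the geometric shrinkage $\rho_{p+1}=\rho/4(p+1)^{2}$, $\sigma_{p+1}=\sigma/4(p+1)^{2}$ of the analyticity widths, and --- above all --- that the linear-in-$p$ improvement $cp\mapsto c(p+1)$ of the exponent genuinely survives both the Gevrey small-divisor losses $\gamma^{-4d_{0}^{2}}\Delta^{4d_{0}^{2}}(|k|)$ and the factorial losses inherent in the Gevrey symbol calculus. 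It is precisely here that the growth control \eqref{ab} on $\log\Delta(t)/t^{1/\alpha}$ and the integrability \eqref{ac} do the real work, forcing these losses to be only polynomial in $p$ and hence harmless against the per-step contraction factor $\exp(-ch^{\frac{1}{\alpha-1}})$.
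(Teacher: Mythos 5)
Your proposal is correct and follows essentially the same route as the paper's own argument: induction on $p$ with $p=1$ as the base case, quantization of the classical KAM transformation generated by the solution $F$ of the homological equation \eqref{an}, conjugation by the resulting unitary $U_{p}$, application of the semiclassical Egorov theorem to keep the conjugate an $h$-PDO, and propagation of the exponential smallness via the geometric iteration estimate of Section~3. You are somewhat more explicit than the paper about the nested small-divisor sets $\Omega_{p}^{\gamma}$ and about how $(\exp(-ch^{1/(\alpha-1)}))^{p}$ yields the linear-in-$p$ exponent in \eqref{bl}, but the underlying mechanism and the key ingredients are the same.
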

\begin{remark}
If $U_{0}(\omega)$ and $V_{0}(\omega)$ are diagonalizable (i.e. the resonance variable are completely separable), then the above equation can be further simplified to:
\begin{align*}
\frac{\epsilon}{2}\hat{M_{p}}(\omega_{p};\epsilon)=\frac{\epsilon}{2}\sum_{j=1}^{d_{0}}\bigg(\lambda_{j}Op_{h}^{w}(u_{j}^{2})+\tilde{\lambda}_{j}Op_{h}^{w}(v_{j}^{2})\bigg).
\end{align*}
That is, it is a group of independent resonant direction quantum oscillators.
\end{remark}
\begin{proof}
We proceed by induction. For $p=1$, we already have the corresponding results. To go from step $p-1$ to step $p$, we consider the operator
\begin{align*}
X_{p-1}\hat{H}(\epsilon,h)X_{p-1}^{-1}&:=P_{0}(\omega_{p-1}(h;\epsilon))+\frac{\epsilon}{2}\hat{M_{p}}(\omega_{p-1};\epsilon)+\epsilon\varepsilon_{p-1}(h;\epsilon)I_{d}
+\epsilon\sum_{s=2}^{p-1}Y_{s}R_{s-1}(h)Y_{s}^{-1}\epsilon^{s-2}\\
&~~+\epsilon^{p-1}R_{p-1}(h;\epsilon)+\epsilon^{p-1}\tilde{P}_{p-1}.
\end{align*}
We have to determine and estimate the unitary operator $U_{p}$ transforming it into the form \eqref{ae} via the definitions \eqref{af}. Then, we have at $p^{th}$ iteration step
\begin{align*}
&X_{p}\hat{H}(\epsilon,h)X_{p}^{-1}=U_{p}(X_{p-1}H(\epsilon,h)X_{p-1}^{-1})U_{p}^{-1}=P_{0}(\omega_{p}(h;\epsilon))+\frac{\epsilon}{2}\hat{M_{p}}(\omega_{p};\epsilon)
+\epsilon^{p-1}Q_{p}+\epsilon^{p}\tilde{P}_{p},\\
&Q_{p}=\tilde{P}_{p-1}+[U_{p},P_{0}]/ih,\\
&\tilde{P}_{p}=\epsilon^{-p}\bigg(U_{p}(X_{p-1}H(\epsilon,h)X_{p-1}^{-1})U_{p}^{-1}-P_{0}(\omega_{p}(h;\epsilon))-\frac{\epsilon}{2}\hat{M_{p}}(\omega_{p};\epsilon)
-\epsilon(\tilde{P}_{p-1}+[U_{p},P_{0}]/ih) \bigg).
\end{align*}
Here, $U_{p}$ is a unitary operator constructed as in above abstract conjugation scheme, and the expression for $Q_{p}$ follows from isolating the contribution at order $\epsilon^{p-1}$ in the conjugated operator.
We will look for $U_{p}\in \Phi_{\rho,\sigma}^{\alpha}$ and an operator $N_{p}\in\Phi_{\rho,\sigma}^{\alpha}$ with symbols $u_{p}$ and $\mathcal{N}_{p}$ respectively, such that
\begin{align*}
\{p_{0},u_{p}\}+\mathcal{N}_{p}=q_{p},\ \{p_{0},\mathcal{N}_{p}\}=0.
\end{align*}
We can obtain the existence of $u_{p}\in\mathcal{F}_{\rho,\sigma}^{\alpha},\mathcal{N}_{p}\in \mathcal{F}_{\rho,\sigma}^{\alpha}$ with the stated properties now by directly applying the result for $p=1$. Expanding $\mathcal{N}_{p}$ as the equation \eqref{at} and taking into account the definitions \eqref{af} we immediately check that $U_{p}(X_{p-1}\hat{H}(\epsilon,h)X_{p-1}^{-1})U_{p}^{-1}$ has the form \eqref{ae}.

The norm estimation process for $P_{p}$ is similar to that for $P_{1}$. Next, we only have to prove that the operator $U_{p}R_{p}U_{p}^{-1}$ is an $h$-pseudodifferential operator, assuming by the inductive argument the validity of these properties for $R_{p-1}$. On the other hand, $U_{p}$ is an $h$-pseudodifferential operator of order 0. We can therefore apply the semiclassical Egorov theorem (see e.g. \cite{MR897108}, Chapter 4) to assert that $U_{p}R_{p}U_{p}^{-1}$ is again an $h$-pseudodifferential operator. Denote $\eta(y,z,\epsilon,h)$ the Weyl symbol of $U_{p}R_{p}U_{p}^{-1}$ and it has the expansion
\begin{align*}
\eta(y,z,\epsilon,h)=\sum_{j=0}^{M}h^{j}\eta_{j}(y,z,\epsilon)+O(h^{M+1}).
\end{align*}
\end{proof}
By deriving the resonant quantum normal form for the operator $\hat{H}(\epsilon,h)$, we obtain a clear and structured understanding of the spectral behavior under small perturbations. The normal form reveals the detailed structure of the spectrum, particularly in the context of invariant torus splitting induced by partial frequency resonances. This provides a simplified yet comprehensive picture of how the perturbations influence the quantum system's spectrum. The spectral expression derived from the normal form explicitly depends on quantum numbers, highlighting spectral clustering phenomena and the eigenvalue spacing determined by the quadratic terms in resonant directions. These quadratic terms play a crucial role in the structure of the spectrum, and their presence is directly linked to the occurrence of quantum scarring. This framework also facilitates the analysis of quantum state localization on lower-dimensional invariant tori, which is characteristic of semiclassical quantum systems under resonance conditions.
\begin{proof}[Proof of Theorem \ref{bc}]
At the $p^{th}$ iteration, the frequency is given by
\begin{align}\label{bd}
\omega_{p}(h,\epsilon)=\omega+\sum_{s=1}^{p}\nabla_{y}\mathcal{N}_{s}(h)\epsilon^{s},
\end{align}
and
\begin{align}\label{bg}
M_{p}(\omega_{p},\epsilon)=M+\sum_{s=1}^{p}\nabla_{z}^{2}\mathcal{N}_{s}(h)\epsilon^{s}.
\end{align}
Since $\|\nabla_{z}f(z)\|_{\rho-d,\sigma-s}^{\alpha}\leq \frac{1}{ds}\|f(z)\|_{\rho,\sigma}^{\alpha}$ and $\|\nabla_{z}^{2}f(z)\|_{\rho-d,\sigma-s}^{\alpha}\leq \frac{1}{(ds)^{2}}\|f(z)\|_{\rho,\sigma}^{\alpha}$, by \eqref{bl}, two series \eqref{bd},
\eqref{bg} converges as $p\rightarrow\infty$ for $|\epsilon|<\epsilon^{*}$, if $\epsilon^{*}$ is small enough, uniformly with respect to $h\in [0,h^{*}]$. Let $\omega_{\infty}(h,\epsilon):=\lim_{p\rightarrow\infty}\omega_{p}(h,\epsilon)$ and $M_{\infty}:=\lim_{p\rightarrow\infty}M_{p}(h,\epsilon)$. Then, $\omega(h,\epsilon)$ is in $\Omega_{0}$ with constant $\gamma_{\infty}$. In the same way:
\begin{align*}
\varepsilon(h,\epsilon)=\sum_{s=1}^{\infty}\mathcal{N}_{s}(h)\epsilon^{s},\quad |\epsilon|<\epsilon^{*}.
\end{align*}
We introduce the resonance quantum number, denoted as $n_{res}=(n_{u},n_{v})$.  We can get the spectral structure of the quasi-differential operator $\hat{H}(\epsilon)$ as
\begin{align*}
E(n_{y},E_{u},E_{v},\epsilon,h)=\varepsilon(\epsilon,h)+h\sum_{j=1}^{d}\omega_{j}(n_{y}^{j}+\frac{\vartheta_{j}}{4})+\frac{\epsilon}{2}E_{res}(n_{u},n_{v})+\epsilon \mathcal{R}(h;\epsilon),
\end{align*}
where
\begin{align*}
E_{res}(n_{u},n_{v})=\sum_{j=1}^{d_{0}}\lambda_{j}(n_{u}^{j}+\frac{1}{2})+\sum_{j=1}^{d_{0}}\tilde{\lambda}_{j}
(n_{v}^{j}+\frac{1}{2}),
\end{align*}
here, $\lambda_{j},\tilde{\lambda}_{j}$ is the characteristic frequency in the resonance direction, i.e. $\lambda_{j},\tilde{\lambda}_{j}$ are the eigenvalues of $U_{0}(\omega),V_{0}(\omega)$. Finally, let $\mathcal{R}$ be an asymptotic sum of the power series $\sum_{s=2}^{\infty}Y_{s}R_{s-1}(h)Y_{s}^{-1}\epsilon^{s-1}$.
\end{proof}

\section{Estimate frequency set}
Following from \cite{MR1401420}, we can make an assert that $|\mathcal{T}_{k}(\beta)|\leq C\frac{\beta}{|k|}$. To prove this inequality, first, we set
\begin{align*}
\mathcal{T}_{k}(\beta)=\bigg\{\omega\in[0,1]^{l}: |\langle\omega,k\rangle|\leq\beta,\ |\det A_{1}|\leq\frac{\gamma^{2d_{0}}}{\Delta^{2d_{0}}(|k|)},\ |\det A_{2}|\leq\frac{\gamma^{4d^{2}_{0}}}{\Delta^{4d^{2}_{0}}(|k|)}\bigg\},
\end{align*}
\begin{align*}
\Omega_{1}:=\Omega_{0}-\bigcup_{k\in \mathbb{Z}^{d}\setminus\{0\}}\mathcal{T}_{k}\bigg(\frac{\gamma_{1}}{\Delta(|k|)}\bigg).
\end{align*}
Similarly, we have
\begin{align*}
\bigg|\{\omega:|\langle k,\omega\rangle|\leq \beta\}\bigg|\leq \frac{C}{|k|}\beta.
\end{align*}
We briefly describe the proof.  We introduce the unperturbed frequencies $\zeta=\omega(\xi)$ as parameter over the domain $\Lambda=\omega(\Pi)$ and consider the resonance zones $\mathcal{T}_{k}^{\Lambda}=\omega(\mathcal{T}_{k})$ in $\Lambda$. Now, we consider the frequency set $\{\omega:|\langle k,\omega\rangle|\leq \beta\}$. Let $\phi(\zeta)=\langle k,\omega'(\zeta)\rangle$. Choose a vector $v\in [0,1]^{l}$ such that $\langle k,v\rangle=|k|$ and write $\zeta=rv+w$ with $r\in\mathbb{R},w\in v^{\bot}$. As a function of $r$, we then have, for $t>s$,
\begin{align*}
\langle k,\omega'(\zeta)\rangle|_{s}^{t}=\langle k,\zeta\rangle|_{s}^{t}+\langle k,\omega'(\zeta)=\zeta\rangle|_{s}^{t}\geq \frac{1}{2}|k|(t-s).
\end{align*}
Hence, $\phi(rv+w)|_{s}^{t}\leq\frac{1}{2}|k|(t-s)$ uniformly in $w$. It follows that
\begin{align*}
\{r:rv+w\in\Lambda,|\phi(rv+w)|\leq\delta\}\subset \{r:|r-r_{0}(w)|\leq 2\delta|k|^{-1}\}
\end{align*}
with $r_{0}$ depending miserably on $w$, and hence
\begin{align*}
\bigg|\{\omega:|\langle k,\omega\rangle|\leq \beta\}\bigg|\leq \frac{C}{|k|}\beta.
\end{align*}
From \cite{MR1730569}, we have
\begin{align*}
\bigg|\bigg\{\omega:|g(\langle k,\omega\rangle)|\leq \frac{\gamma_{1}^{4d_{0}^{2}}}{\Delta^{4d^{2}_{0}}(|k|)}\bigg\}\bigg|\leq C\frac{\gamma_{1}}{\Delta(|k|)}.
\end{align*}
Similarly,
\begin{align*}
\bigg|\bigg\{\omega:|\det A_{1,1}|\leq \frac{\gamma_{1}^{2d_{0}}}{\Delta^{2d_{0}}(|k|)}\bigg\}\bigg|\leq C\frac{\gamma_{1}}{\Delta(|k|)}.
\end{align*}
Then, we can arrive that
\begin{align*}
\bigg|\mathcal{T}_{k}(\beta)\bigg|\leq C\frac{\beta}{|k|}.
\end{align*}
Hence, we can write
\begin{align*}
\bigg|\bigcup_{k\in \mathbb{Z}^{d}\setminus\{0\}}\mathcal{T}_{k}\bigg(\frac{\gamma_{1}}{\Delta(|k|)}\bigg)\bigg|\leq \sum_{k\in \mathbb{Z}^{d}\setminus\{0\}}\frac{\gamma_{1}}{\Delta(|k|)|k|}\leq \gamma_{1}\sum_{m=1}^{\infty}\frac{m^{d-1}}{\Delta(m)}<\infty
\end{align*}
through the condition \eqref{ab},\eqref{ac}.
Therefore
\begin{align*}
|\Omega_{0}-\Omega_{1}|\leq O(\gamma_{1}).
\end{align*}

\section{Quantum states}
We can prove that quantum states have scarring phenomena on the $d$-dimensional invariant torus. We need to impose some conditions on this law form to make it linearly independent. In this section, we are mainly concerned with the $m$-dimensional action variable $y$ in the integrable part of the law form about the stable direction, while the resonant direction ($d_{0}$ dimension) can be "encoded" as a secondary term in the law form, and its semiclassical influence is secondary. This variable is quantized and recorded as $I_{m},m\in \mathbb{Z}^{d}$. Except for the parameters $\epsilon$ and $h$, other variables are not explicitly included in the expression of $K^{0}$.

\subsection{Separation of quasieigenvalue}
To analyze spectral concentration in this section, we draw on the idea of using the spectral flow of a one-parameter family of operators, as discussed in Hassell's paper \cite{MR2630052}, a pioneering work on scarring. First, for any the non-degenerate completely integrable Hamiltonian $H_{0}(I)=H(\phi,I;0)$, we consider a one-parameter family of the perturbed Hamiltonian
\begin{align*}
H(\phi,I;\epsilon)\in \mathcal{G}^{\alpha,\alpha,1}(\mathbb{T}^{d}\times D\times(-\epsilon^{*},\epsilon^{*})),\ |\epsilon^{*}|<1.
\end{align*}
We choose $\gamma$ sufficiently small so that the set of non-resonant frequency set $\Omega_{0}$ has positive measure. And without loss of generality, we assume that $D$ is convex by sharking if necessary. We define the index
\begin{align}\label{bb}
\mathcal{M}_{h}(\epsilon)=\{m\in\mathbb{Z}^{d}:|E_{\gamma}(\epsilon)-h(m+\vartheta/4)|\leq Lh\},
\end{align}
where $E_{\gamma}(\epsilon)=\omega^{-1}(\Omega_{0},\epsilon)$, which depends on the parameter $\epsilon$.

According to the previous discussion, we have
\begin{align*}
\mathcal{K}^{0}=\varepsilon_{p}(h;\epsilon)I_{d}+P_{0}(\omega_{p}(h;\epsilon))+\frac{\epsilon}{2}\hat{M_{p}}(\omega_{p};\epsilon)+
\epsilon\sum_{s=2}^{\infty}Y_{s}R_{s-1}(h)Y_{s}^{-1}\epsilon^{s-2}
\end{align*}
with
\begin{align}\label{bn}
\varepsilon(h,\epsilon)=\sum_{s=1}^{\infty}\mathcal{N}_{s}(h)\epsilon^{s},\quad \omega_{\infty}=\omega+\sum_{s=1}^{\infty}\nabla_{y}\mathcal{N}_{s}(h)\epsilon^{s},\quad M_{\infty}=M+\sum_{s=1}^{\infty}\nabla_{z}^{2}\mathcal{N}_{s}(h)\epsilon^{s},
\end{align}
where $P_{0}(h,\omega)$ is the maximal operator in $L^{2}$  generated by the differential expression
\begin{align*}
\tilde{P}_{0}(h,\omega)=h\sum_{i=1}^{d}\omega_{i}z_{i}D_{y_{i}}\equiv h\langle\omega y,\nabla_{y}\rangle.
\end{align*}
$\hat{M}_{p}$ is a pseudodifferential operator by quantizing Weyl symbol $\langle z,M_{p}z\rangle$. Let $K^{0}$ be the symbol of $\mathcal{K}^{0}$, which admits a full asymptotic expansion in $h$, i.e.
\begin{align*}
K^{0}(I,\epsilon,h)=\sum_{j=0}^{\infty}K^{0}_{j}(I,\epsilon)h^{j}
\end{align*}
with the principal symbol
\begin{align*}
K^{0}_{0}=\varepsilon(\epsilon)+\langle\omega_{\infty},y\rangle+\frac{\epsilon}{2}\langle z,M_{\infty}(\omega)z\rangle+\epsilon\sum_{s=2}^{\infty}\Psi_{\epsilon}^{s}\circ\mathcal{R}_{s-1}\epsilon^{s-2}.
\end{align*}
The expressions for $\varepsilon(\epsilon),\omega_{\infty}$ and $M_{\infty}$ in the case $h=0$ coincide with those in Equation \eqref{bn}.
We take quasi-eigenvalue as the form $\lambda(\epsilon;h)=K^{0}(h(m+\vartheta/4);\epsilon,h)$, where $I_{m}=h(m+\vartheta/4),m\in \mathcal{M}_{h}(\epsilon)$.

We make the geometric assumption that the functions $\partial_{\epsilon}K^{0}(I;0,h),\partial^{2}_{\epsilon}K^{0}(I;0,h),\cdots,\partial_{\epsilon}^{d-1}K^{0}(I;0,h)$ are linearly independent. Based on this assumption on the perturbation family $H(\phi,I;\epsilon)$, we assert that the collection $K^{0}, \partial_{\epsilon} K^{0},\cdots,\partial_{\epsilon}^{(d-1)}K^{0}$ forms a local coordinate system on the action domain $D$ ffor all sufficiently small $\epsilon<\epsilon^{*}$ and $h<h_{0}$. This follows from the fact that in dimension $d$, the level sets of these $d$ functions intersect transversally, thereby furnishing a local coordinate chart for the action space.  More precisely, we have the following result:
\begin{proposition}\label{am}
There exist $\epsilon^{*},h_{0}>0$ such that for all $\epsilon<\epsilon^{*}$ and $h<h_{0}$,
\begin{align*}
\eta:I\rightarrow\left(K^{0}(I;\epsilon,h),\partial_{\epsilon}K^{0}(I;\epsilon,h),\cdots, \partial_{\epsilon}^{(d-1)}K^{0}(I;\epsilon,h)\right)
\end{align*}
is a local diffeomorphism. Moreover, we have
\begin{align*}
G_{1}|\eta(I_{1})-\eta(I_{2})|\leq|I_{1}-I_{2}|\leq G_{2}|(I_{1})-\eta(I_{2})|
\end{align*}
for some positive constants $G_{1},G_{2}$ that depend on our choice of perturbation $H$ but are uniform in $\epsilon$ and $h$.
\end{proposition}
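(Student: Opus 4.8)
The plan is to treat $\eta(\cdot\,;\epsilon,h)$ as an $O(\epsilon)$-perturbation, uniform in $h\in(0,h_{0}]$, of the map $\eta(\cdot\,;0,h)$ --- whose differential is invertible by hypothesis --- and to extract both the local diffeomorphism property and the two-sided estimate from the quantitative inverse function theorem on the compact convex closure $\overline{D}$. Since $I$ ranges in a $d$-dimensional domain and $\eta$ has $d$ components, $D\eta(I;\epsilon,h)$ is the $d\times d$ matrix whose $k$-th row is $\nabla_{I}\bigl(\partial_{\epsilon}^{\,k-1}K^{0}\bigr)(I;\epsilon,h)$, $k=1,\dots,d$. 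The first step is to record, using the asymptotic expansion $K^{0}=\sum_{j\ge0}K^{0}_{j}(I,\epsilon)h^{j}$, the analyticity of $K^{0}$ in $\epsilon$, and the smoothness of the $K^{0}_{j}$ in $I$, that $(I,\epsilon,h)\mapsto D\eta(I;\epsilon,h)$ is continuous and bounded on $\overline{D}\times[-\epsilon^{*},\epsilon^{*}]\times[0,h_{0}]$, with a Lipschitz modulus in $I$ uniform in $(\epsilon,h)$; in particular $\|D\eta\|\le G_{2}^{-1}$ uniformly.

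The second step is the nondegeneracy at $\epsilon=0$. There every $\epsilon$-dependent term of the principal symbol vanishes, so $K^{0}(I;0,h)=\langle\omega,I\rangle+O(h^{2})$, the $O(h)$ contribution being the subprincipal symbol, which is zero by Theorem \ref{bm}; hence the first row of $D\eta(I;0,h)$ is, to leading order, the constant vector $\omega$. The geometric hypothesis then says precisely that this covector, together with the rows $\nabla_{I}\partial_{\epsilon}^{\,k-1}K^{0}(I;0,h)$ for $k=2,\dots,d$, forms a linearly independent family at each $I\in D$ --- equivalently, that the level sets of $K^{0},\partial_{\epsilon}K^{0},\dots,\partial_{\epsilon}^{d-1}K^{0}$ at $\epsilon=0$ intersect transversally. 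By compactness of $\overline{D}$ this yields $|\det D\eta(I;0,h)|\ge c_{0}>0$ uniformly in $h\le h_{0}$ (after shrinking $h_{0}$); continuity of $\det D\eta$ in $\epsilon$, uniform in $(I,h)$, then gives $|\det D\eta(I;\epsilon,h)|\ge c_{0}/2$, hence $\|D\eta(I;\epsilon,h)^{-1}\|\le G_{1}^{-1}$, on all of $\overline{D}\times[-\epsilon^{*},\epsilon^{*}]\times[0,h_{0}]$ after a final shrinking of $\epsilon^{*}$.

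The third step is the conclusion. The inverse function theorem, applied pointwise, gives that $\eta(\cdot\,;\epsilon,h)$ is a local diffeomorphism. For the upper bound, convexity of $\overline{D}$ and $\|D\eta\|\le G_{2}^{-1}$ give $|\eta(I_{1})-\eta(I_{2})|\le G_{2}^{-1}|I_{1}-I_{2}|$ by integrating $D\eta$ along the segment $[I_{1},I_{2}]$; for the reverse inequality one combines $\|D\eta^{-1}\|\le G_{1}^{-1}$ with the uniform Lipschitz bound on $D\eta$ to see that $\eta$ is injective, with $G_{1}^{-1}$-Lipschitz inverse, on balls of a radius $r_{0}$ independent of $\epsilon$ and $h$; restricting $D$ to such a chart --- which is the sense of ``local coordinate system'' in the statement --- then yields $|I_{1}-I_{2}|\le G_{1}^{-1}|\eta(I_{1})-\eta(I_{2})|$, with $G_{1},G_{2}$ depending only on $H$.

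I expect the main obstacle to be the uniformity in the semiclassical parameter $h$: one must control the full symbol $K^{0}=\sum_{j}K^{0}_{j}h^{j}$, its $\epsilon$-derivatives, and its $I$-gradients on $\overline{D}$ by constants independent of $h\in(0,h_{0}]$. This is exactly where the exponentially small remainder estimate \eqref{bl} and the Cauchy-type inequalities for Gevrey symbols used in the proof of Theorem \ref{bc} come in: they make the series for $\varepsilon(h,\epsilon)$, $\omega_{\infty}(h,\epsilon)$, $M_{\infty}(h,\epsilon)$ and for $K^{0}$ converge with $h$-uniform bounds, so that $\eta(\cdot\,;\epsilon,h)$ is a genuine $O(\epsilon)+O(h)$ perturbation, in $C^{2}(\overline{D})$, of the unperturbed map. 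A smaller but necessary point is to read the ``linear independence'' of the functions $\partial_{\epsilon}^{k}K^{0}(\cdot\,;0,h)$ in the statement as pointwise linear independence of their differentials (transversality of level sets), since that is what actually makes $D\eta$ invertible.
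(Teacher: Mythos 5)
The paper supplies no formal proof of Proposition \ref{am}: the only justification offered is the paragraph immediately preceding it, which asserts that the geometric assumption forces the level sets of $K^{0},\partial_{\epsilon}K^{0},\ldots,\partial_{\epsilon}^{d-1}K^{0}$ to intersect transversally and hence to furnish a local coordinate chart on $D$. Your argument is a correct and complete implementation of that sketch---uniform $h$-bounds on $D\eta$ from the semiclassical expansion of $K^{0}$ and the remainder estimate \eqref{bl}, nondegeneracy of $D\eta(\cdot;0,h)$ from the geometric assumption plus $\nabla_{I}K^{0}(I;0,h)=\omega+O(h^{2})$, compactness of $\overline{D}$ to get a uniform lower bound on $|\det D\eta|$, continuity in $\epsilon$ to pass to small $\epsilon\neq0$, and the quantitative inverse function theorem for the bi-Lipschitz estimate---and you rightly flag that the paper's ``linear independence'' hypothesis must be read as pointwise linear independence of the differentials $\nabla_{I}\partial_{\epsilon}^{k}K^{0}$ together with $\nabla_{I}K^{0}$, which is what actually makes $D\eta$ invertible.
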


\begin{remark}
We will write $I_{m}=h(m+\vartheta/4)$, for $m\in \mathcal{M}_{h}(\epsilon)$, so a fixed $I_{m}\in h(\mathbb{Z}^{d}+\vartheta/4),\ m\in \mathcal{M}_{h}(\epsilon)$ will only be in $E_{\gamma}(\epsilon)\subset D$ for $O(h)$-sized intervals as $\epsilon$ varies.
\end{remark}
For any two distinct $I_{m}, I_{m^{\prime}}$, we have:
\begin{proposition}\label{aj}
Suppose that
\begin{align*}
 \lim_{h\rightarrow 0}\frac{C_{1}\gamma h^{-1/2}}{( \Delta^{-1}( C_{1}h^{-1/2}))^{2}}=\infty.
\end{align*}
Then, for all distinct $m,\ m^{\prime}\in \mathbb{Z}^{d}$ with $I_{m},\ I_{m^{\prime}}\in D$ such that
\begin{align*}
|I_{m}-I_{m^{\prime}}|\leq h \Delta^{-1}( C_{1}h^{-1/2}),
\end{align*}
and $m\in \mathcal{M}_{h}(\epsilon)$, we have
\begin{align*}
| \mu_{m}-\mu_{m^{\prime}}|\geq C_{2}h^{3/2},
\end{align*}
where the constants $C_{1},C_{2}$ depend on the choice of perturbation $H$ and on the non-resonant constant $\kappa$ but independent of $t$ and $h$.
\end{proposition}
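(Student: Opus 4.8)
The plan is to identify $\mu_m$ with the value of the quantum normal-form symbol at the quantized action, $\mu_m=K^{0}(I_m;\epsilon,h)$ with $I_m=h(m+\vartheta/4)$, and to estimate $\mu_m-\mu_{m'}$ by a second-order Taylor expansion along the segment joining $I_{m'}$ and $I_m$. The linear term will be bounded below via the small-divisor condition \eqref{b}, while the quadratic remainder will be shown to be of strictly smaller order in $h$; the standing hypothesis on $\Delta^{-1}(C_1h^{-1/2})$ is precisely the inequality that makes this comparison work.

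First I would set $k:=m-m'\in\mathbb{Z}^{d}\setminus\{0\}$ and rephrase the assumptions. Since $\Delta$ is increasing, $|I_m-I_{m'}|=h|k|\le h\,\Delta^{-1}(C_1h^{-1/2})$ gives $|k|\le\Delta^{-1}(C_1h^{-1/2})$, hence $\Delta(|k|)\le C_1h^{-1/2}$; and the limit $\frac{C_1\gamma h^{-1/2}}{(\Delta^{-1}(C_1h^{-1/2}))^{2}}\to\infty$ is equivalent to $(\Delta^{-1}(C_1h^{-1/2}))^{2}=o(h^{-1/2})$, whence $|k|=o(h^{-1/4})$. Taylor's formula then yields $\mu_m-\mu_{m'}=h\langle\nabla_IK^{0}(I_m;\epsilon,h),k\rangle+\tfrac12\langle\nabla_I^{2}K^{0}(\xi;\epsilon,h)(I_m-I_{m'}),\,I_m-I_{m'}\rangle$ for some $\xi$ on $[I_{m'},I_m]$. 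By the convergence of the series defining $\varepsilon,\omega_\infty,M_\infty$ in \eqref{bn} and the remainder bound \eqref{bl}, the symbol $K^{0}$ and all its derivatives are bounded on $D$ uniformly in $(\epsilon,h)$, so the quadratic term is $O(h^{2}|k|^{2})=o(h^{2}\cdot h^{-1/2})=o(h^{3/2})$; the exponentially small normal-form remainder contributes even less.

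It remains to bound the linear term from below, and here I would exploit $m\in\mathcal{M}_h(\epsilon)$. Pick $I^{*}\in E_{\gamma}(\epsilon)=\omega^{-1}(\Omega_{0},\epsilon)$ with $|I_m-I^{*}|\le Lh$; by construction of $E_{\gamma}(\epsilon)$ the effective frequency $\omega^{*}:=\nabla_IK^{0}_{0}(I^{*};\epsilon)$ lies in $\Omega_{0}$, so $|\langle\omega^{*},k\rangle|>\gamma/\Delta(|k|)\ge\gamma C_1^{-1}h^{1/2}$. Writing $\nabla_IK^{0}(I_m;\epsilon,h)=\omega^{*}+O(h)$ — an $O(h)$ error from transporting $I_m$ to $I^{*}$ along the Lipschitz frequency map (nondegeneracy in $(A)$ plus convergence of the normal form) and an $O(h)$ from $K^{0}-K^{0}_{0}$ — one gets $|\langle\nabla_IK^{0}(I_m;\epsilon,h),k\rangle|\ge\gamma C_1^{-1}h^{1/2}-O(h)|k|\ge\gamma C_1^{-1}h^{1/2}-o(h^{3/4})\ge\tfrac12\gamma C_1^{-1}h^{1/2}$ for $h<h_0$. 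Multiplying by $h$ and subtracting the $o(h^{3/2})$ remainder found above gives $|\mu_m-\mu_{m'}|\ge\tfrac14\gamma C_1^{-1}h^{3/2}$, so the claim holds with $C_2=\gamma/(4C_1)$.

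The crux, used in the last paragraph, is the assertion that it is the \emph{local} frequency $\nabla_IK^{0}(I_m;\epsilon,h)$ at the lattice point — not merely the asymptotic frequency $\omega_\infty$ of the resonant surface — that inherits the Diophantine bound \eqref{b} up to an $O(h)$ error. Making this rigorous requires combining (i) the definition of $\mathcal{M}_h(\epsilon)$, which confines $I_m$ to an $O(h)$-neighbourhood of $E_{\gamma}(\epsilon)$; (ii) the nondegeneracy in $(A)$ together with convergence of the KAM normal form, which render $I\mapsto\nabla_IK^{0}_{0}(I;\epsilon)$ uniformly Lipschitz on $D$; and (iii) the observation that the $\epsilon\mathcal{R}$-part of the normal form, though $O(\epsilon)$ in size, modifies the effective frequency rather than the small divisor and so does not degrade the constant $\gamma$. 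The rest is the elementary bookkeeping above, and the matching of orders $O(h^{2}|k|^{2})\ll h\cdot\gamma/\Delta(|k|)$ is exactly what the hypothesis $\frac{C_1\gamma h^{-1/2}}{(\Delta^{-1}(C_1h^{-1/2}))^{2}}\to\infty$ encodes.
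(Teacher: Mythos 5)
Your proposal is correct and follows essentially the same route as the paper's proof: reduce $\mu_m-\mu_{m'}$ to the leading symbol $K_0$, Taylor-expand to second order in $I$ around $I_m$, use $m\in\mathcal{M}_h(\epsilon)$ to transport the Diophantine lower bound from a nearby non-resonant action (your $I^*$, the paper's $I_\omega$) to $\nabla_IK_0(I_m)$ with an $O(h)$ error, and then invoke the standing limit hypothesis to show the $O(h^2|k|^2)$ quadratic remainder is $o(h^{3/2})$. You spell out a few points more explicitly than the paper (the equivalence $|k|\le\Delta^{-1}(C_1h^{-1/2})\Leftrightarrow\Delta(|k|)\le C_1h^{-1/2}$, the observation $|k|=o(h^{-1/4})$, and the separation of the transport error from the $K^0-K_0$ error), but the decomposition and the role of the hypothesis are the same.
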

\begin{proof}
Firstly, we prove the first estimate:
\begin{align}\label{q}
|I_{m}-I_{m^{\prime}}|\leq h \Delta^{-1}( C_{1}h^{-1/2})\quad \text{if and only if}\quad |m-m^{\prime}|\leq \Delta^{-1}( C_{1}h^{-1/2}).
\end{align}
Next, we already know that $K^{0}$ has a semiclassical expansion, that is,
\begin{align*}
K^{0}(I;\epsilon,h)=\sum_{0\leq j\leq\eta h^{-1/\rho}}K_{j}(I;\epsilon)h^{j},
\end{align*}
and following from the leading order term in the semiclassical expansion of $K^{0}$, we have
\begin{align*}
|\mu_{m}-\mu_{m^{\prime}}|\geq |K_{0}(I_{m},\epsilon)-K_{0}(I_{m^{\prime}},\epsilon)|+O(h^{2})
\end{align*}
uniformly for $\epsilon<\epsilon^{*}$. The function $K_{0}(I_{m^{\prime}},\epsilon)$  undergoes a Taylor expansion with respect to the variable $I$ at $I_{m}$ to second order:
\begin{align}\label{r}
K_{0}(I_{m^{\prime}},\epsilon)=K_{0}(I_{m},\epsilon)+\langle \nabla_{I}K_{0}(I_{m},\epsilon),(I_{n}-I_{m})\rangle+\langle \nabla_{I}^{2}K_{0}(\hat{I},\epsilon)(I_{m^{\prime}}-I_{m}),(I_{m^{\prime}}-I_{m})\rangle
\end{align}
for some $\hat{I}$ on the line segment between $I_{m^{\prime}}$ and $I_{m}$ in components.
\par
Since $m\in \mathcal{M}_{h}(\epsilon)$, we also have
\begin{align*}
|\nabla_{I}K_{0}(I_{m},\epsilon)-\nabla_{I}K_{0}(I_{\omega},\epsilon)|=O(h)
\end{align*}
uniformly to $\epsilon<\epsilon^{*}$, where $I_{\omega}$ is some non-resonant action corresponding to a non-resonant frequency $\omega\in \Omega_{0}$. Bring this estimate into \eqref{r}, and using the fact $\omega=\nabla_{I}K_{0}(I_{\omega},\epsilon),\ I_{m^{\prime}}-I_{m}=h(m^{\prime}-m)$, we get
\begin{align}\label{s}
|\mu_{m}-\mu_{m^{\prime}}|&\geq |K_{0}(I_{m},\epsilon)-K_{0}(I_{m^{\prime}},\epsilon)|\\ \notag
& =h\nabla_{I}K_{0}(I_{\omega},\epsilon)\cdot(m-m^{\prime})+O(h^{2}|m-m^{\prime}|^{2})+O(h^{2}|m-m^{\prime}|)\\ \notag
&  =h\nabla_{I}K_{0}(I_{\omega},\epsilon)\cdot(m-m^{\prime})+O(h^{2}|m-m^{\prime}|^{2})\\ \notag
& =h\omega\cdot(m-m^{\prime})+O(h^{2}|m-m^{\prime}|^{2})\\ \notag
& \geq \frac{h\gamma}{\Delta(|m-m^{\prime}|)}+O(h^{2}|m-m^{\prime}|^{2})\\ \notag
& \geq \kappa C_{1} h^{3/2}+O((h \Delta^{-1}( C_{1}h^{-1/2}))^{2}).
\end{align}
For the assumption:
\begin{align}
 \lim_{h\rightarrow 0}\frac{C_{1}\gamma h^{-1/2}}{( \Delta^{-1}( C_{1}h^{-1/2}))^{2}}=\infty,
\end{align}
we demand that  $\Delta^{-1}( C_{1}h^{-1/2}))\rightarrow\infty,$ as $h\rightarrow0$, and slower than $h^{1/4}\rightarrow0$. We can infer that $h^{3/2}$ is lower-order term. This claim is proved upon choosing $C_{1}$ appropriately.
\end{proof}
\begin{remark}
It is worth noting that the inverse of Proposition \ref{aj} is true. In a word, if two distinct quasi-eigenvalues $\mu_{m},\ \mu_{m^{\prime}}$ are very close (even less than $C_{2}h^{3/2}$ apart), then their actions $I_{m},\ I_{m^{\prime}}$ are at least $h\Delta^{-1}(C_{1}h^{-1/2})$ distance apart.
\end{remark}

By applying Proposition \ref{am}, we obtain a lower bound on the difference of speeds $\partial_{\epsilon}(\mu_{m}-\mu_{m^{\prime}})$. Consequently, this implies that two quantities diverge rapidly as $\epsilon$ evolves. This behavior can be quantified as follows:
\begin{proposition}\label{ak}
Choose any $\delta>7/4$. Suppose that $h<h_{0},\ m,\ m^{\prime}\in \mathbb{Z}^{d}$ are distinct, and  $\epsilon_{0}\in (0,\epsilon^{*})$ are fixed with $I_{m},\ I_{m^{\prime}}\in D,\ m\in \mathcal{M}_{h}(\epsilon)$ and
\begin{align}\label{t}
|\ \mu_{m}(\epsilon_{0},h)-\mu_{m^{\prime}}(\epsilon_{0},h)|<h^{\delta}<h^{7/4}.
\end{align}
Set
\begin{align*}
\mathcal{C}_{m,m^{\prime}}(h)=\{\epsilon\in (0,\epsilon^{*}):|\ \mu_{m}(\epsilon,h)-\mu_{m^{\prime}}(\epsilon,h)|<h^{\delta}\}.
\end{align*}
Then there exist positive constants $\tilde{C}_{1},\tilde{C}_{2}$ which depend on the constants $C_{1},\ C_{2}$ from Proposition \ref{aj} as well as the geometric constants $G_{1},\ G_{2}$ from Proposition \ref{am} such that
\begin{align}\label{bf}
\frac{meas\left([\epsilon^{*}-\tilde{C}_{1}h^{3/4},\epsilon^{*}+\tilde{C}_{1}h^{3/4}]\cap\mathcal{C}_{m,n}\right)}{h^{3/4}}<\tilde{C}_{2}h^{\delta-7/4}
(\Delta^{-1}(C_{1}h^{-1/2}))^{-1}.
\end{align}
Moreover, when $\delta>7/4+d$, we also have the following estimate
\begin{align*}
meas\left(\left\{t\in(0,\epsilon^{*}):m\ \in\ \mathcal{M}_{h}(t)\ and\ |\mu_{m}-\mu_{m^{\prime}}|  < h^{\gamma}\ for\ all\ m^{\prime}\ \neq\ m, I_{m^{\prime}}\ \in\ D\right\}\right)\\
=O(h^{\delta-7/4-d}(\Delta^{-1}(C_{1}h^{-1/2}))^{-1}).
\end{align*}
\end{proposition}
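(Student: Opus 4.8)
The plan is to convert the dichotomy of Proposition~\ref{aj} into a transversality statement for the one‑parameter family $\epsilon\mapsto g(\epsilon):=\mu_m(\epsilon,h)-\mu_{m'}(\epsilon,h)$ and then run a sublevel‑set estimate. First I observe that, since $\delta>7/4$ and $h$ is small, hypothesis \eqref{t} gives $|g(\epsilon_0)|<h^{\delta}<C_2h^{3/2}$; because $I_m=h(m+\vartheta/4)$ does not depend on $\epsilon$, the converse of Proposition~\ref{aj} (the Remark following it) forces, once and for all, the $\epsilon$‑independent separation $|I_m-I_{m'}|\ge h\,\Delta^{-1}(C_1h^{-1/2})$.

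Next I note that $\partial_\epsilon^{\,j}g(\epsilon)=\partial_\epsilon^{\,j}K^{0}(I_m;\epsilon,h)-\partial_\epsilon^{\,j}K^{0}(I_{m'};\epsilon,h)$, so the jet vector $\bigl(g,\partial_\epsilon g,\dots,\partial_\epsilon^{\,d-1}g\bigr)(\epsilon)$ is exactly $\eta(I_m)-\eta(I_{m'})$ for the map $\eta=\eta_{\epsilon,h}$ of Proposition~\ref{am}. The bi‑Lipschitz lower bound there then yields, for \emph{every} $\epsilon\in(0,\epsilon^{*})$ and $h<h_0$,
\[
\Bigl(\textstyle\sum_{j=0}^{d-1}|\partial_\epsilon^{\,j}g(\epsilon)|^{2}\Bigr)^{1/2}=|\eta(I_m)-\eta(I_{m'})|\ge G_2^{-1}|I_m-I_{m'}|\ge c_h,\qquad c_h:=G_2^{-1}h\,\Delta^{-1}(C_1h^{-1/2}).
\]
On $\mathcal{C}_{m,m'}(h)$ one has $|g|<h^{\delta}$, and since $\delta>7/4$ while $\Delta^{-1}(C_1h^{-1/2})\to\infty$ we have $h^{\delta}\ll c_h$; hence on $\mathcal{C}_{m,m'}(h)$ the derivative block dominates and $\max_{1\le j\le d-1}|\partial_\epsilon^{\,j}g(\epsilon)|\gtrsim c_h$. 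This is the quantitative sense in which $\mu_m$ and $\mu_{m'}$ "diverge rapidly" in $\epsilon$: along $\mathcal{C}_{m,m'}(h)$, at every point some $\epsilon$‑derivative of $g$ of order between $1$ and $d-1$ is bounded below by a multiple of $h\,\Delta^{-1}(C_1h^{-1/2})$.

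With this lower bound in hand I would localize to the window $J=[\epsilon^{*}-\tilde C_1h^{3/4},\epsilon^{*}+\tilde C_1h^{3/4}]$, on which $K^{0}$ — hence $\partial_\epsilon^{\,j}g$ for $1\le j\le d$ — is uniformly bounded by a constant depending only on $H$ (here $\delta>7/4$ also makes \eqref{t} compatible with $J$). A van der Corput / Rolle‑type sublevel‑set argument, using the pointwise lower bound $\max_{1\le j\le d-1}|\partial_\epsilon^{\,j}g|\gtrsim c_h$ on $\mathcal{C}_{m,m'}(h)\cap J$ together with the uniform bound on $\partial_\epsilon^{\,d}g$ to control the number of connected components, then bounds $\operatorname{meas}\bigl(\mathcal{C}_{m,m'}(h)\cap J\bigr)$ by a constant multiple of $h^{\delta}/c_h\asymp h^{\delta-1}\bigl(\Delta^{-1}(C_1h^{-1/2})\bigr)^{-1}$; dividing by $h^{3/4}$ gives \eqref{bf}. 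For the last assertion I would cover $(0,\epsilon^{*})$ by $O(h^{-3/4})$ windows of length $2\tilde C_1h^{3/4}$, apply the previous estimate on each, and union‑bound over the admissible partners $m'$ — of which there are only $O(h^{-d})$ with $I_{m'}\in D$, since $D$ is bounded and the lattice spacing is $h$; the resulting $O(h^{-3/4-d})$ contributions, each of size $O\!\bigl(h^{\delta-1}(\Delta^{-1}(C_1h^{-1/2}))^{-1}\bigr)$, sum to $O\!\bigl(h^{\delta-7/4-d}(\Delta^{-1}(C_1h^{-1/2}))^{-1}\bigr)$, which is $o(1)$ precisely when $\delta>7/4+d$.

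The main obstacle is the sublevel‑set step: one must pass from the pointwise statement "\emph{some} derivative $\partial_\epsilon^{\,j}g$, $1\le j\le d-1$, is large" — not a single fixed derivative being large throughout — to the measure bound \eqref{bf} with the stated first power of $h^{\delta-7/4}(\Delta^{-1}(C_1h^{-1/2}))^{-1}$, while keeping every constant uniform in $\epsilon$, $h$ and the lattice points $m,m'$. This is exactly where the shortness of the window $J$ (scale $h^{3/4}$) and the uniform bounds on the higher $\epsilon$‑derivatives of $K^{0}$ are essential, since a careless application of van der Corput's lemma would only yield the weaker exponent $1/(d-1)$.
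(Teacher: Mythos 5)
Your overall strategy matches the paper's: feed the separation $|I_m-I_{m'}|\ge h\Delta^{-1}(C_1h^{-1/2})$ coming from the Remark after Proposition~\ref{aj} into the bi-Lipschitz bound of Proposition~\ref{am}, Taylor-expand $g(\epsilon)=\mu_m(\epsilon,h)-\mu_{m'}(\epsilon,h)$ on a window of width $\sim h^{3/4}$ around $\epsilon_0$, and then estimate the sublevel set $\{|g|<h^{\delta}\}$. The difference is how you get a derivative lower bound out of Proposition~\ref{am}. The paper simply asserts, ``as an application of Proposition~\ref{am},'' that $|\partial_\epsilon K^{0}(I_m;\epsilon,h)-\partial_\epsilon K^{0}(I_{m'};\epsilon,h)|\ge C\,h\Delta^{-1}(C_1h^{-1/2})$ — that is, a lower bound on the \emph{first} $\epsilon$-derivative alone — and then finishes with a second-order Taylor expansion around $\epsilon_0$, no van der Corput lemma. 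You, by contrast, correctly read Proposition~\ref{am} as furnishing only a lower bound on the full jet $(g,\partial_\epsilon g,\ldots,\partial_\epsilon^{d-1}g)$, peel off the first component using $|g|<h^\delta\ll c_h$, and are left with $\max_{1\le j\le d-1}|\partial_\epsilon^{j}g|\gtrsim c_h$, which you then try to feed into a van der Corput/Rolle argument. That is a more honest reading of what Proposition~\ref{am} gives.

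The gap is precisely where you flag it, and it is not repaired by the shortness of the window or by the uniform bounds on $\partial_\epsilon^{d}g$. If at some $\epsilon$ in the window only $\partial_\epsilon^{2}g$ (say) is of size $\gtrsim c_h$ while $\partial_\epsilon g$ is small, then the sublevel set $\{|g|<h^{\delta}\}$ near that point has measure of order $(h^{\delta}/c_h)^{1/2}$, and since $h^{\delta}/c_h\ll1$ this is strictly \emph{larger} than $h^{\delta}/c_h$. Shrinking the window to scale $h^{3/4}$ does not collapse this exponent loss: since $\partial_\epsilon^{2}g=O(1)$, the first derivative can change by only $O(h^{3/4})$ over the window, so it may simply remain small throughout, and the van der Corput estimate stays at the $1/j$ power. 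You thus cannot reach the claimed first power $h^{\delta}/c_h\asymp h^{\delta-1}(\Delta^{-1}(C_1h^{-1/2}))^{-1}$ from the jet bound alone via van der Corput. What is needed — and what the paper simply takes for granted — is a lower bound on the \emph{first} $\epsilon$-derivative of $g$ specifically. For $d=2$ (the setting of the Gomes–Hassell reference) this does follow, since the jet has just two components and discarding the small $g$-component directly isolates $\partial_\epsilon g$; for the $d>2$ case treated here, neither Proposition~\ref{am} nor your proposed van der Corput refinement supplies it, and an additional argument (or a stronger geometric hypothesis separating the first-derivative component) is required. Your cover-and-sum argument for the second assertion is fine given \eqref{bf}, but it inherits the same gap, and also note that the set in that assertion should read ``for \emph{some} $m'\neq m$'' rather than ``for all'', which is the version your union bound actually proves.
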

\begin{proof}
Applying the inverse of Proposition \ref{aj}, we have $|I_{m}-I_{m^{\prime}}|\geq h\Delta^{-1}(C_{1}h^{-1/2})$ and as an application of Proposition \ref{am}, we obtain
\begin{align*}
|\partial_{\epsilon}\mu_{m}(\epsilon;h)-\partial_{\epsilon}\mu_{m^{\prime}}(\epsilon;h)| = |\partial_{\epsilon}K^{0}(I_{m},\epsilon;h)-\partial_{\epsilon}K^{0}(I_{m^{\prime}},\epsilon;h)\geq C\ h\Delta^{-1}(C_{1}h^{-1/2}),
\end{align*}
where $C$ relies on $C_{1},\ C_{2}$ and the geometric constants $G_{i},i=1,2$. We do Taylor's expansion of the time parameter $\epsilon$ to the second order, then we have
\begin{align*}
\mu_{m}(\epsilon;h)=K^{0}(I_{m},\epsilon;h)=K^{0}(I_{m},\epsilon_{0};h)+(\epsilon-\epsilon_{0})\partial_{\epsilon}(K^{0}(I_{m},\epsilon;h))+O(|\epsilon-\epsilon_{0}|^{2}).
\end{align*}
In a similar way, by Taylor's expansion, we also have
\begin{align*}
\mu_{m^{\prime}}(\epsilon;h)=K^{0}(I_{m^{\prime}},\epsilon;h)=K^{0}(I_{m^{\prime}},\epsilon_{0};h)+(\epsilon-\epsilon_{0})\partial_{\epsilon}(K^{0}(I_{m^{\prime}},\epsilon;h))
+O(|\epsilon-\epsilon_{0}|^{2}),
\end{align*}
and their error terms are uniform in $h$ and $m$. Make the difference between the above two formulas, and use formula \eqref{t}:
\begin{align*}
|\ \mu_{m}(\epsilon;h)-\mu_{m^{\prime}}(\epsilon;h)|&=(\epsilon-\epsilon_{0})|\partial_{\epsilon}(K^{0}(I_{m},\epsilon;h))-\partial_{\epsilon}(K^{0}(I_{m^{\prime}},\epsilon;h))|\\
&\quad+O(h^{\gamma})+O(|\epsilon-\epsilon_{0}|^{2}).
\end{align*}
To get the results we expect, we choose $\tilde{C}_{1}$, such that the linear term for $|\epsilon-\epsilon_{0}|<\tilde{C}_{1}h^{3/4}$ can controlled the quadratic term $O(|\epsilon-\epsilon_{0}|^{2})$. Moreover, due to $\delta>7/4$, $h^{\delta}$ is controlled by the others for sufficiently small $h$. It follows that
\begin{align*}
|\ \mu_{m}(\epsilon;h)-\mu_{m^{\prime}}(\epsilon;h)| & \geq \frac{1}{2}(\epsilon-\epsilon_{0})|\partial_{\epsilon}(K^{0}(I_{m},\epsilon;h))-\partial_{\epsilon}(K^{0}(I_{m^{\prime}},\epsilon;h))| \\
&\geq \frac{1}{2}C(\epsilon-\epsilon_{0})\tilde{C}_{1}h\Delta^{-1}(C_{1}h^{-1/2})
\end{align*}
for $\epsilon\in [\epsilon_{0}-\tilde{C}_{1}h^{3/4},\epsilon_{0}+\tilde{C}_{1}h^{3/4}]$. Hence, we have
\begin{align*}
\epsilon\in\mathcal{C}_{m,m^{\prime}} & \Longleftrightarrow \frac{1}{2}C(\epsilon-\epsilon_{0})\tilde{C}_{1}h\Delta^{-1}(C_{1}h^{-1/2})\leq h^{\delta}\\
& \Longleftrightarrow|\epsilon-\epsilon_{0}|\leq C^{-1}h^{\delta-1}(\Delta^{-1}(C_{1}h^{-1/2}))^{-1}.
\end{align*}
This yields the estimate of this proposition, where the constants $\tilde{C}_{i}$ depend on the original Hamiltonian, the  perturbation, $\gamma$, and $G_{i}$, but not on $t$ or $h$. Finally, set
\begin{align*}
A_{m}=\{\epsilon\in(0,\epsilon^{*}):m\ \in\ \mathcal{M}_{h}(t)\ and\ |\ \mu_{m}-\mu_{m^{\prime}}| < h^{\delta}\ for\ all\ m^{\prime}\ \neq\ m, I_{m^{\prime}}\ \in\ D\}.
\end{align*}
By recalling a measure-theoretic lemma and using estimate \eqref{bf}, we get
\begin{align*}
meas(\{\epsilon\in(0,\epsilon^{*}):m\ \in\ \mathcal{M}_{h}(\epsilon)\ and\ |\ \mu_{m}-\mu_{m^{\prime}}| < h^{\delta}\})\\=O(h^{\delta-7/4}(\Delta^{-1}(C_{1}h^{-1/2}))^{-1}),
\end{align*}
then by summing over all such $m^{\prime}$, $\{m^{\prime}\in\mathbb{Z}^{d}:\ I_{m^{\prime}}\in D\}$, we have
\begin{align*}
meas(A_{m})&=h^{-d}\cdot meas(D)\cdot O(h^{\delta-7/4}\Delta^{-1}(C_{1}h^{-1/2}))\\&=O(h^{\delta-7/4-d}(\Delta^{-1}(C_{1}h^{-1/2}))^{-1}).
\end{align*}
This completes the proof.
\end{proof}
From Proposition \ref{ak}, we can draw a conclusion that $\forall \epsilon\in(0,\epsilon^{*})$, we have $|\mu_{m}-\mu_{m^{\prime}}|>h^{\delta}$, for $m\in \mathcal{M}_{h}(\epsilon)$ with $m^{\prime}\in\mathbb{Z}^{d}, m^{\prime}\neq m,\ I_{m},I_{m^{\prime}}\in D$. And, if we introduce so-called energy window $\mathcal{W}_{m}(h):=[\mu_{m}(h)-\frac{h^{\delta}}{3},\mu_{m}(h)+\frac{h^{\delta}}{3}]$, then $\mu_{m}$ is only quasieigenvalue in $\mathcal{W}_{m}(h),\ m\in \mathcal{M}_{h}(\epsilon),\ \epsilon\in(0,\epsilon^{*})$.
\par
\begin{remark}
In the definition of $A_{m}$, it is not required that $m^{\prime}\in\mathcal{M}_{h}(\epsilon)$.
\end{remark}
To formulate this result more precisely, we introduce a new notation. We consider a new set:
\begin{align*}
N(\epsilon;h)=\sharp \mathcal{M}_{h}(\epsilon)=\sharp \{m\in\mathbb{Z}^{d}:I_{m}\in D\ and\ \epsilon\in A_{m}\}.
\end{align*}
\begin{proposition}\label{ap}
Let $N$ be defined as above. Then for $\delta>7/4+2d$, the set
\begin{align*}
\mathfrak{g}:=\{\epsilon\in(0,\epsilon^{*}),\exists\ sequence\ h_{j}\ \rightarrow\ 0\ such\ that\ N(\epsilon;h_{j})=0\ for\ all\ j\}
\end{align*}
has full measure in $(0,\epsilon^{*})$.
\end{proposition}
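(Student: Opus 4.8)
The plan is to bound, uniformly in $h$, the Lebesgue measure of the ``bad'' set $\{\epsilon\in(0,\epsilon^{*}):N(\epsilon;h)\geq1\}$, and then to run the first Borel--Cantelli lemma along a geometric sequence $h_{j}\to0$. By the second description of $N$ in the statement, $N(\epsilon;h)\geq1$ precisely when $\epsilon\in\bigcup_{m:\,I_{m}\in D}A_{m}$, with $A_{m}$ the ``collision'' set introduced in the proof of Proposition \ref{ak}. I would use two ingredients: first, the number of indices $m\in\mathbb{Z}^{d}$ with $I_{m}=h(m+\vartheta/4)\in D$ is $O(h^{-d})$, since these lattice points have mesh $h$ and the action domain $D$ is bounded (convex, by the standing assumption); second, Proposition \ref{ak} gives $meas(A_{m})=O\big(h^{\delta-7/4-d}(\Delta^{-1}(C_{1}h^{-1/2}))^{-1}\big)$, with the implied constant uniform in $m$ (it depends only on $H$, the perturbation, $\gamma$, and the geometric constants $G_{1},G_{2}$, not on $m$ or $h$).

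Combining these by subadditivity, and using $(\Delta^{-1}(C_{1}h^{-1/2}))^{-1}\leq1$ for $h$ small (recall $\Delta^{-1}(C_{1}h^{-1/2})\to\infty$ as $h\to0$ under the hypothesis of Proposition \ref{aj}), one gets
\[
meas\big(\{\epsilon\in(0,\epsilon^{*}):N(\epsilon;h)\geq1\}\big)\leq\sum_{m:\,I_{m}\in D}meas(A_{m})\leq C\,h^{\delta-7/4-2d}.
\]
Now fix $h_{j}=2^{-j}$ and set $B_{j}=\{\epsilon\in(0,\epsilon^{*}):N(\epsilon;h_{j})\geq1\}$. Since $\delta>7/4+2d$, the exponent $\delta-7/4-2d$ is strictly positive, so $\sum_{j}meas(B_{j})\leq C\sum_{j}2^{-j(\delta-7/4-2d)}<\infty$. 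By the first Borel--Cantelli lemma, $meas(\limsup_{j}B_{j})=0$; hence for almost every $\epsilon\in(0,\epsilon^{*})$ there is $J(\epsilon)$ with $\epsilon\notin B_{j}$, i.e.\ $N(\epsilon;h_{j})=0$, for all $j\geq J(\epsilon)$. The tail $(h_{j})_{j\geq J(\epsilon)}$ is then a sequence tending to $0$ along which $N$ vanishes, so $\epsilon\in\mathfrak{g}$. This shows $\mathfrak{g}$ contains a full-measure subset of $(0,\epsilon^{*})$, which is the assertion.

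The one delicate point is the compounded summation. Proposition \ref{ak} already absorbs one factor $h^{-d}$ by summing, for fixed $m$, over all partners $m'$ with $I_{m'}\in D$; the present argument spends a second factor $h^{-d}$ by summing over $m$ itself, and it is exactly this $h^{-2d}$ loss that pins down the threshold $\delta>7/4+2d$, with no room to spare. So the care is entirely in extracting the $O(h^{-d})$ lattice count cleanly and in checking that every constant furnished by Proposition \ref{ak} is genuinely independent of $m$ and $h$; the remaining steps -- choosing a summable sequence and invoking Borel--Cantelli -- are routine.
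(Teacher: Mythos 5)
The paper gives no proof of its own for Proposition \ref{ap} --- it simply cites Gomes--Hassell \cite{MR4404789} --- but your argument is the Borel--Cantelli route that the reference carries out, and it is correct. Rewriting $\{\epsilon:N(\epsilon;h)\geq1\}=\bigcup_{m:\,I_{m}\in D}A_{m}$, using the $O(h^{-d})$ count of lattice points $I_{m}=h(m+\vartheta/4)$ in the bounded domain $D$, and invoking the uniform-in-$m$ bound $\mathrm{meas}(A_{m})=O\bigl(h^{\delta-7/4-d}(\Delta^{-1}(C_{1}h^{-1/2}))^{-1}\bigr)$ from Proposition \ref{ak} together with $(\Delta^{-1}(C_{1}h^{-1/2}))^{-1}\leq1$ for small $h$, you get $\mathrm{meas}\{N(\cdot;h)\geq1\}=O(h^{\delta-7/4-2d})$; taking $h_{j}=2^{-j}$ makes this summable exactly when $\delta>7/4+2d$, and Borel--Cantelli then gives $\mathfrak{g}$ full measure, since for a.e.\ $\epsilon$ the tail of $(h_{j})$ past $J(\epsilon)$ is a null sequence along which $N$ vanishes. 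Your bookkeeping of the two $h^{-d}$ losses --- one absorbed inside Proposition \ref{ak} when summing over $m'$, the second incurred here when summing over $m$ --- is precisely the right explanation of why the threshold is $7/4+2d$ rather than $7/4+d$, and it is the only place where care is genuinely needed.
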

The detailed proof can be found in Reference \cite{MR4404789}.
\subsection{Positive mass}
The aim of this subsection is to study the number of eigenvalue $E_{k}(h_{j})$ lying in the window $\mathcal{W}_{m}(h):=[\mu_{m}(h)-\frac{h^{\delta}}{3},\mu_{m}(h)+\frac{h^{\delta}}{3}],m\in\mathcal{M}_{h}(\epsilon)$. According to Proposition \ref{ak}, the energy windows $\mathcal{W}_{m}(h_{j})$ are disjoint. Having established that $\mathfrak{g}$ is of full measure in Proposition \ref{ap} provided $\delta>7/4+2d$, we now fix $\delta>7/4+2d$. For simplicity, we will no longer writing time $t$.
\par
Let
\begin{align*}
\mathcal{N}_{m}(h)=\sharp\{E_{k}(h)\in \mathcal{W}_{m}(h)\}.
\end{align*}
According to $Weyl's$ law, the total number of eigenvalue $E_{k}(h_{j})$ lying in the energy band $[a,b]$ is asymptotically equal to
$(2\pi h)^{-d}meas(p^{-1}([a,b]))$, where $p=\sigma(P_{h})$, meanwhile, the number of quasi-modes in our local patch $\mathbb{T}^{d}\times D$ is asymptotically equivalent to $ (2\pi h_{j})^{-d}meas(E_{\gamma})$.
\par
Fixing $\lambda>1$, we define
\begin{align}\label{w}
\tilde{\mathcal{M}}_{h}(\lambda):=\left\{m\in\mathcal{M}_{h}:\mathcal{N}_{m}(h)<\lambda \frac{meas(p^{-1}([a,b]))}{meas(E_{\gamma})}\right\}.
\end{align}
\begin{lemma}\label{ar}
For fixed $\delta>7/4+2d$ and each $m\in \tilde{\mathcal{M}}_{h_{j}}(\lambda)$, there exists an $L^{2}$-normalised eigenfunction $u_{k_{j}}(h_{j})$ with eigenvalue $E_{k_{j}}(h_{j})\in [\mu_{m}-h^{\delta}_{j}/3,\mu_{m}+h^{\delta}_{j}/3]$ such that
\begin{align}
\max_{|E_{k_{j}}-\mu_{m}|\leq h^{\gamma}_{j}}|\langle u_{k_{j}},v_{m}\rangle|\geq\frac{1-o(1)}{\lambda }\cdot\frac{meas(E_{\gamma})}{meas(p^{-1}([a,b])}.
\end{align}
\end{lemma}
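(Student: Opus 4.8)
The plan is to run the standard quasimode-to-eigenfunction comparison: from the quantum normal form of Section~4 produce an exponentially accurate approximate eigenstate $v_m$ concentrated on the $d$-torus, project it onto the spectral subspace of $\hat H(\epsilon,h_j)$ cut out by the window $\mathcal W_m(h_j)$, and then pigeonhole among the eigenvalues lying inside that window.

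First I would fix, for $m\in\mathcal M_{h_j}(\epsilon)$ as in \eqref{bb}, the quasi-eigenvalue $\mu_m=K^{0}(I_m;\epsilon,h_j)$ with $I_m=h_j(m+\vartheta/4)$, and take $v_m\in L^2$ to be the normalised quasimode associated with $\mu_m$: the state obtained by transporting, through the unitary $X_p^{-1}$ that brings $\hat H(\epsilon,h_j)$ to the normal form \eqref{ae}, the joint eigenstate of the model operator --- the Fourier mode $\propto e^{\mathrm{i}\langle m+\vartheta/4,x\rangle}$ in the $d$ integrable angles times the ground states of the resonant oscillators $\hat M_p$ in the variables $z=(u,v)$. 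Since $\mathcal K^{0}$ acts as a Fourier multiplier in the action variables on this model space and the remainder obeys \eqref{bl}, hence is $O(\epsilon\exp(-ch_j^{-\frac{1}{\alpha-1}}))$ and thus smaller than every power of $h_j$, this gives
\[
\|(\hat H(\epsilon,h_j)-\mu_m)v_m\|_{L^2}\;\le\;r_m:=C\epsilon\exp(-ch_j^{-\frac{1}{\alpha-1}})=o(h_j^{\delta}),
\]
uniformly over $m\in\tilde{\mathcal M}_{h_j}(\lambda)$.

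Next I would put $\Pi_m:=\mathbf{1}_{\mathcal W_m(h_j)}(\hat H(\epsilon,h_j))$ and, writing $\{E_t\}$ for the spectral family of $\hat H(\epsilon,h_j)$, bound via the spectral theorem
\[
\|(I-\Pi_m)v_m\|^2=\int_{|t-\mu_m|>h_j^{\delta}/3}\mathrm{d}\langle E_t v_m,v_m\rangle\le\Bigl(\tfrac{3}{h_j^{\delta}}\Bigr)^{2}\|(\hat H(\epsilon,h_j)-\mu_m)v_m\|^2=\Bigl(\tfrac{3r_m}{h_j^{\delta}}\Bigr)^{2}=o(1),
\]
so $\|\Pi_m v_m\|^2\ge 1-o(1)$. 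Expanding $\Pi_m v_m$ in an $L^2$-orthonormal basis $\{u_k(h_j)\}$ of eigenfunctions of $\hat H(\epsilon,h_j)$ with eigenvalues $E_k(h_j)\in\mathcal W_m(h_j)$ gives $\sum_{k:\,E_k\in\mathcal W_m}|\langle v_m,u_k\rangle|^2\ge 1-o(1)$, a sum of exactly $\mathcal N_m(h_j)$ terms, the count being unambiguous because the windows are disjoint by Proposition~\ref{ak}. A pigeonhole step then produces $k_j$ with $E_{k_j}(h_j)\in\mathcal W_m(h_j)$ and $|\langle v_m,u_{k_j}\rangle|^2\ge(1-o(1))/\mathcal N_m(h_j)$; since $m\in\tilde{\mathcal M}_{h_j}(\lambda)$, definition~\eqref{w} forces $\mathcal N_m(h_j)<\lambda\,meas(p^{-1}([a,b]))/meas(E_\gamma)$, whence
\[
|\langle v_m,u_{k_j}\rangle|^2\;\ge\;\frac{1-o(1)}{\lambda}\cdot\frac{meas(E_\gamma)}{meas(p^{-1}([a,b]))}.
\]
Passing to square roots --- legitimate because the right-hand side lies in $[0,1]$, as $meas(E_\gamma)\le meas(p^{-1}([a,b]))$ (fewer quasimodes than band eigenvalues, by the Weyl count) and $\lambda>1$ --- and observing that $E_{k_j}$ lies in the window $\mathcal W_m(h_j)$, hence is among the eigenvalues over which the maximum in the statement ranges, yields the assertion.

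The step I expect to be the main obstacle is the construction and estimate of $v_m$: one must verify that the model state, pushed back through the quantised canonical transformations $\Phi_g,\Psi_\epsilon$ of Sections~2--3 and the unitariser $\tilde Q_h$ of Section~4, remains $L^2$-normalised, stays microlocally concentrated on $\{y=I_m,\ z=0\}$, and is an approximate eigenstate with residual beating every power of $h_j$. This rests on the Egorov-type control of the order-zero operator $X_p$ together with the exponentially small bound \eqref{bl} and the Gevrey estimates of Proposition~\ref{bk}; the remaining ingredients --- the spectral projection bound, the pigeonhole, and the Weyl count used to place the measure ratio in $[0,1]$ --- are routine. One should also confirm the uniformity of the $o(1)$ terms over $m\in\tilde{\mathcal M}_{h_j}(\lambda)$, which is immediate since $r_m$ is controlled by one and the same exponentially small quantity for all such $m$.
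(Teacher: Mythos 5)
The paper does not actually prove this lemma---it defers entirely to \cite{MR4404789}---and your argument is exactly the quasimode/spectral-projection/pigeonhole scheme used there, correctly executed. The one remark worth adding is that your pigeonhole step naturally produces the squared-overlap bound $|\langle v_m,u_{k_j}\rangle|^{2}\geq\tfrac{1-o(1)}{\lambda}\cdot\tfrac{meas(E_{\gamma})}{meas(p^{-1}([a,b]))}$, which is strictly stronger than the unsquared form stated in the lemma once one observes (as you do) that the ratio lies in $[0,1]$; also note that the remainder estimate as printed in \eqref{bl} has a sign typo in the exponent (it must read $h^{-1/(\alpha-1)}$ to be exponentially small, as elsewhere in Section~4), and your proposal uses the corrected form, as it should.
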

The proof of this lemma can be found in \cite{MR4404789}.
We introduce the notation
\[
\begin{aligned}
\tilde{\mathcal{J}}_{h_{j}}(\lambda)&=\{h_{j}(m+\vartheta/4): m\in\tilde{\mathcal{M}}_{h_{j}}(\lambda)\},\\
\mathcal{J}_{h_{j}}(\lambda)&=\{h_{j}(m+\vartheta/4): m\in\mathcal{M}_{h_{j}}\}.
\end{aligned}
\]
We will prove that the distance between most resonant actions $E_{\gamma}$ of KAM tori and actions in $\tilde{\mathcal{M}}_{h_{j}}(\lambda)$ is $O(h_{j})$ for all sufficiently large $j$. This shows that the set $\tilde{\mathcal{M}}_{h}(\lambda)$ consisting in index corresponding the concentrating quasimodes associated to such torus actions is contained in $\mathcal{M}_{h_{j}}$, that is $\tilde{\mathcal{M}}_{h}(\lambda)\subset \mathcal{M}_{h_{j}}$.
\begin{proposition}\label{aq}
Let $\tilde{\mathcal{J}}_{h_{j}}$ be defined as above. Then we have
\begin{align}\label{y}
\frac{meas(\{I\in E_{\gamma}:dist(I,\tilde{\mathcal{J}}_{h_{j}}(\lambda))<Lh_{j}\})}{meas(E_{\gamma})}\geq1-\frac{L^{d}}{\pi^\frac{d}{2}\lambda}
\end{align}
for all sufficiently large $j$.
\end{proposition}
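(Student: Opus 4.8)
The plan is to deduce the measure statement from two Weyl-type counts together with a covering argument. Fix $\epsilon$ in the full-measure set $\mathfrak{g}$ of Proposition \ref{ap} and work along a sequence $h_j\to0$ for which $N(\epsilon;h_j)=0$; then, as recorded after Proposition \ref{ap}, for all large $j$ the windows $\mathcal{W}_m(h_j)=[\mu_m-h_j^{\delta}/3,\mu_m+h_j^{\delta}/3]$, $m\in\mathcal{M}_{h_j}(\epsilon)$, are pairwise disjoint, and each $\mu_m=K^0(I_m;\epsilon,h_j)$ with $I_m$ in a fixed $Lh_j$-neighbourhood of the compact set $E_\gamma$, so all these windows lie inside one fixed energy band $[a,b]$. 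First I would quote the two asymptotics already stated above: the number of eigenvalues $E_k(h_j)$ in $[a,b]$ equals $(2\pi h_j)^{-d}\,meas(p^{-1}([a,b]))(1+o(1))$, and the number of Bohr--Sommerfeld indices is $\sharp\mathcal{M}_{h_j}=(2\pi h_j)^{-d}\,meas(E_\gamma)(1+o(1))$.

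The next step is to bound the cardinality of the ``overpopulated'' set $\mathcal{B}_{h_j}:=\mathcal{M}_{h_j}\setminus\tilde{\mathcal{M}}_{h_j}(\lambda)$. By the definition \eqref{w} of $\tilde{\mathcal{M}}_{h_j}(\lambda)$, every $m\in\mathcal{B}_{h_j}$ satisfies $\mathcal{N}_m(h_j)\ge\lambda\,meas(p^{-1}([a,b]))/meas(E_\gamma)$. Since the windows $\mathcal{W}_m(h_j)$ are disjoint subsets of $[a,b]$, summing over $m\in\mathcal{B}_{h_j}\subset\mathcal{M}_{h_j}$ and comparing with the eigenvalue count gives
\[
\sharp\mathcal{B}_{h_j}\cdot\lambda\,\frac{meas(p^{-1}([a,b]))}{meas(E_\gamma)}\ \le\ \sum_{m\in\mathcal{M}_{h_j}}\mathcal{N}_m(h_j)\ \le\ (2\pi h_j)^{-d}\,meas(p^{-1}([a,b]))\,(1+o(1)),
\]
hence $\sharp\mathcal{B}_{h_j}\le\lambda^{-1}(2\pi h_j)^{-d}\,meas(E_\gamma)(1+o(1))$; that is, the bad indices make up a fraction at most $\lambda^{-1}(1+o(1))$ of $\mathcal{M}_{h_j}$.

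Then I would turn this into a measure bound on $E_\gamma$. Choosing $L$ large enough that the lattice $h_j(\mathbb{Z}^d+\vartheta/4)$ is $Lh_j$-dense, every $I\in E_\gamma$ lies within $Lh_j$ of its nearest lattice point $I_{m^\ast}$, and then $dist(I_{m^\ast},E_\gamma)\le|I-I_{m^\ast}|\le Lh_j$ forces $m^\ast\in\mathcal{M}_{h_j}$; consequently, if $dist(I,\tilde{\mathcal{J}}_{h_j}(\lambda))\ge Lh_j$ then $m^\ast\notin\tilde{\mathcal{M}}_{h_j}(\lambda)$, i.e. $m^\ast\in\mathcal{B}_{h_j}$. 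This yields the inclusion
\[
\{I\in E_\gamma:\ dist(I,\tilde{\mathcal{J}}_{h_j}(\lambda))\ge Lh_j\}\ \subset\ \bigcup_{m\in\mathcal{B}_{h_j}}B(I_m,Lh_j),
\]
and bounding the right-hand measure by $\sharp\mathcal{B}_{h_j}$ times the volume of an $Lh_j$-ball, then inserting the cardinality estimate, lets $meas(E_\gamma)$ factor out and leaves a bound of the form $\frac{L^d}{\pi^{d/2}\lambda}\,meas(E_\gamma)(1+o(1))$ for this exceptional set; taking complements in $E_\gamma$ and absorbing the $o(1)$ for large $j$ gives \eqref{y}. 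The precise accounting of the dimensional volume constant is carried out as in \cite{MR4404789}.

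The hard part is not the arithmetic above but making its soft inputs rigorous and uniform. First, one must know that the quasi-eigenvalue windows $\mathcal{W}_m$ genuinely contain true eigenvalues and that $\mathcal{K}^0$ reproduces the spectrum of $\hat H(\epsilon,h)$ accurately on the scale $h^\delta$; this is exactly where the exponentially small remainder \eqref{bl} and the separation estimates of Propositions \ref{aj} and \ref{ak} are indispensable, since they guarantee that the windows are pairwise disjoint and that at most one quasi-eigenvalue sits in each. Second, the two Weyl asymptotics must hold with a common $(1+o(1))$ along the chosen sequence $h_j\to0$, so that the bound ``fraction $\le\lambda^{-1}$'' is meaningful in the limit. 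The remaining routine points are controlling the overlaps of the balls $B(I_m,Lh_j)$ so the covering bound is not wasteful, checking that $E_\gamma$ and $p^{-1}([a,b])$ have measure equal to that of their closures, and verifying that the windows stay inside $[a,b]$ up to negligible boundary effects.
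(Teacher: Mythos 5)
Your proposal is correct and follows the same route as the paper: a pigeonhole argument, using disjointness of the windows together with Weyl's law, to bound $\sharp(\mathcal{M}_{h_j}\setminus\tilde{\mathcal{M}}_{h_j}(\lambda))$, followed by a covering of the exceptional set of actions by $Lh_j$-balls centered at the bad lattice points. Your accounting of the $(1+o(1))$ factors and the explicit covering inclusion are in fact a bit tidier than the paper's (which contains minor typographical slips such as $(2\pi h_j)^{-2}$ in place of $(2\pi h_j)^{-d}$ and a loose constant that makes the bound stronger than stated), but the substance of the argument is identical.
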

\begin{proof}
To prove estimate \eqref{y}, we have to use other estimates. The Pigeonhole Principle shows that for large $\lambda,\ \mathcal{N}_{m}(h)$ is only rarely larger than $\lambda\cdot \frac{meas(p^{-1}([a,b]))}{meas(E_{\gamma})}$. Indeed, the disjointness of the $\mathcal{W}_{m}(h_{j})$ implies
\begin{align*}
\limsup_{j\rightarrow\infty}h_{j}^{d}\ \sharp(\mathcal{M}_{h_{j}}\setminus\tilde{\mathcal{M}}_{h_{j}})\cdot\lambda \frac{meas(p^{-1}([a,b]))}{meas(E_{\gamma})} \leq \frac{meas(p^{-1}([a,b]))}{4\pi^{2}}.
\end{align*}
Due to $h_{j}^{d}\ \sharp\mathcal{M}_{h_{j}}\rightarrow (2\pi)^{-d}meas(E_{\gamma})$ as $j\rightarrow\infty$, we come to the conclusion
\begin{align}\label{x}
\sharp(\mathcal{M}_{h_{j}}\backslash\tilde{\mathcal{M}_{h_{j}}})<\frac{2}{\lambda}\cdot (2\pi h_{j})^{-d}meas(E_{\gamma}),
\end{align}
as desired . Then by formula \eqref{x}, we obtain
\begin{align*}
& meas(\{I\in E_{\gamma}:dist(I,\tilde{\mathcal{J}}_{h_{j}})(\lambda)<Lh_{j}\})\\ \notag
 &\quad\geq  1-meas(\{I\in E_{\gamma}:dist(I,\mathcal{J}_{h_{j}}\backslash\tilde{\mathcal{J}}_{h_{j}})(\lambda)<Lh_{j}\})\\ \notag
&\quad\geq  1-\frac{1}{meas(E_{\gamma})}\cdot\sharp(\mathcal{M}_{h_{j}}\backslash\tilde{\mathcal{M}}_{h_{j}})(\lambda))\cdot\pi^{\frac{d}{2}} (Lh_{j})^{d}\\ \notag
&\quad\geq  1-\frac{1}{meas(E_{\gamma})}\cdot\frac{2}{\lambda}(2\pi h_{j})^{-2} meas(E_{\gamma})\cdot\pi^{\frac{d}{2}}(Lh_{j})^{d}\\ \notag
&\quad=  1-\frac{L^{d}}{\pi^{\frac{d}{2}}\lambda}
\end{align*}
for all sufficiently large $j$.
\end{proof}
Due to the proof process of Proposition \ref{aq}, we could make a claim:
\begin{remark}
After exactly calculation, we get
\begin{align*}
\frac{\sharp\tilde{\mathcal{M}}_{h_{j}}}{\sharp\mathcal{M}_{h_{j}}}=1-\frac{\sharp(\mathcal{M}_{h_{j}}\setminus\tilde{\mathcal{M}}_{h_{j}})}{\sharp\mathcal{M}_{h_{j}}}>1-
\frac{2}{\lambda}
\end{align*}
for each sufficiently large $j$ by \eqref{x}. In other words, the proportion of $O(h^{\delta})$-sized energy windows associated to actions in $\mathcal{M}_{h}$ containing at most $\lambda R$ eigenvalue is at least $1-\frac{2}{\lambda}$, when $\lambda>2$.
\end{remark}
\subsection{Proof of scarring}
The key ingredient of proving scarring is Proposition \ref{aq} and Lemma \ref{ar}. The idea of this proof is similar to \cite{MR4404789} (or \cite{c}).
\begin{proof}
Take the subset of $E_{\gamma}$ in \eqref{y} denoted as  $E_{\gamma,j}(\lambda)$, and introduce a new notation as follows:
\begin{align*}
\tilde{E}_{\gamma}(\lambda):=\bigcap_{l=1}^{\infty}\bigcup_{j=l}^{\infty}E_{\gamma,j}(\lambda).
\end{align*}
Then, form Proposition \ref{aq}, we can conclude that $\tilde{E}_{\gamma}(\lambda)$ has measure at least $1-O(\lambda^{-1})$, and  for any $I\in \tilde{E}_{\gamma}(\lambda)$, we may derive that
\begin{align*}
dist(I,\tilde{\mathcal{J}}_{h_{j}}(\lambda))<Lh_{j}
\end{align*}
with infinitely many $j$. For each $I\in \tilde{E}_{\gamma}(\lambda)$ and each such $j$, we choose such an action in $\tilde{\mathcal{J}}_{h_{j}}(\lambda)$, and an associated quasimode $v_{m_{j}}$ for $P_{h_{j}}$, in order to obtain a sequence of quasimodes that concentrates completely on the torus $\Lambda_{\omega}=\{I_{\omega}\}\times\mathbb{T}^{d}$.
\par
For this sequence, we can look for a corresponding sequence of eigenfunction $u_{k_{j}}$ for  $P_{h_{j}}$ by making use of Lemma \ref{ar} such that
\begin{align}\label{u1}
|\langle u_{k_{j}}(h_{j}),v_{m_{j}}(h_{j})\rangle|>\frac{1}{2\lambda}\cdot\frac{meas(E_{\gamma})}{meas(p^{-1}([a,b]))}
\end{align}
for all sufficiently large $j$.
\par
We now claim that sequence $u_{k_{j}}(h_{j})$ scars on the torus $\Lambda_{\omega}$. This is because we can take an arbitrary semiclassical pseudo-differential operator $A_{h}$ with the symbol equal to 1 and compactly supported  around the torus $\Lambda_{\omega}$, and estimate
\begin{align*}
\langle A_{h_{j}}^{2}u_{k_{j}}(h_{j}),u_{k_{j}}(h_{j})\rangle & =\|A_{h_{j}}u_{k_{j}}(h_{j})\|^{2}\\ \notag
& \geq|\langle A_{h_{j}}u_{k_{j}}(h_{j}),v_{m_{j}}(h_{j})\rangle|^{2}\\ \notag
& =|\langle u_{k_{j}}(h_{j}),v_{m_{j}}(h_{j})\rangle+\langle u_{k_{j}}(h_{j}),(A_{h_{j}}-Id)v_{m_{j}}(h_{j})\rangle|^{2}\\ \notag
& >\frac{1}{(2\lambda )^{2}}\cdot\left(\frac{meas(E_{\gamma})}{meas(p^{-1}([a,b]))}\right)^{2}>0
\end{align*}
for all sufficiently large $j$, by \eqref{u1}, and the fact that $v_{m_{j}}$ is a sequence of quasimodes concentrating completely on the torus $\Lambda_{\omega}=\{I_{\omega}\}\times\mathbb{T}^{d}$.
\par
Now, let $\nu$ be a semiclassical measure associated to a subsequence of the $u_{k_{j}}(h_{j})$, then we can conclude that
\begin{align*}
\int\sigma(A)d\nu
\end{align*}
is bounded below by $\frac{1}{(2\lambda )^{2}}\cdot\left(\frac{meas(E_{\kappa})}{meas(p^{-1}([a,b]))}\right)^{2}$. By taking $A$ to have shrinking support around $\Lambda_{\omega}$, we see that $\nu$ has positive mass strictly greater than $\frac{1}{(2\lambda )^{2}}\cdot\left(\frac{meas(E_{\kappa})}{meas(p^{-1}([a,b]))}\right)^{2}$ on $\Lambda_{\omega}$.
\par
Applying this argument with $\lambda\rightarrow\infty$, we establish the existence of such semiclassical measure for almost all $I_{\omega}\in E_{\gamma}$ and the proof is finished.
\end{proof}
Therefore, we can conclude that the eigenstates corresponds to the scarring of the eigenvalue $E$ on the $d$-dimensional invariant torus.
\section{Appendix}
\subsection{Define a new function}
For a given $\alpha-$approximation function $\Delta$, we define a new notation in the following way:
\begin{align*}
\Gamma_{s,n}(\eta)=\sup_{t\geq0}(1+t)^{\bar{r}}\Delta^{n}(t)e^{-\eta t^{\frac{1}{\sigma}}}.
\end{align*}
For every $\eta>0$, there exists a sequence $\{\eta\}_{\nu}, \nu\geq0$ with $\eta_{1}\geq\eta_{2}\geq\cdots>0$ such that $\sum_{\nu\geq0}\eta_{\nu}\leq\eta$. We work with the supermum in the definition of $\Gamma_{s,n}(\eta)$ under such a sequence. Then, we derive the following result:
\begin{lemma}\label{ba}
If
\begin{align*}
\frac{1}{\log\kappa}\int_{T}^{\infty}\frac{\log\Delta(t)}{t^{1+\frac{1}{\alpha}}}{\rm d}t\leq a
\end{align*}
for $1<\kappa\leq2$ and $T\geq\varsigma$, then
\begin{align*}
\Gamma_{r,n}(\eta)\leq e^{\eta T^{\frac{1}{\alpha}}}.
\end{align*}
\end{lemma}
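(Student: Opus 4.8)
The plan is to reduce the bound to a pointwise logarithmic inequality and then exploit the two structural features of an $\alpha$-approximation function, namely the monotone decay \eqref{ab} of $t\mapsto\log\Delta(t)/t^{1/\alpha}$ and the integrability \eqref{ac}, in the spirit of the Rüssmann and Pöschel estimates. Taking logarithms in the definition of $\Gamma_{\bar r,n}(\eta)=\sup_{t\ge0}(1+t)^{\bar r}\Delta^{n}(t)e^{-\eta t^{1/\alpha}}$, the asserted bound $\Gamma_{\bar r,n}(\eta)\le e^{\eta T^{1/\alpha}}$ is equivalent to the statement that for every $t\ge0$,
\begin{align*}
\bar r\log(1+t)+n\log\Delta(t)-\eta\,t^{1/\alpha}\le\eta\,T^{1/\alpha},
\end{align*}
so it suffices to produce, for each of the monotone factors $(1+t)$ and $\Delta(t)$, an affine majorant of its logarithm in the variable $t^{1/\alpha}$ and then add them up against the exponential weight.

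The key step is the linear estimate $\log\Delta(t)\le a\,t^{1/\alpha}$ for $t\ge\kappa T$. Writing $\varphi(t):=\log\Delta(t)/t^{1/\alpha}$, which is eventually nonincreasing by \eqref{ab}, and using $1<\kappa\le2$ together with the hypothesis, one has
\begin{align*}
\varphi(\kappa T)\log\kappa=\varphi(\kappa T)\int_T^{\kappa T}\frac{{\rm d}s}{s}\le\int_T^{\kappa T}\frac{\varphi(s)}{s}\,{\rm d}s=\int_T^{\kappa T}\frac{\log\Delta(s)}{s^{1+1/\alpha}}\,{\rm d}s\le\int_T^{\infty}\frac{\log\Delta(s)}{s^{1+1/\alpha}}\,{\rm d}s\le a\log\kappa,
\end{align*}
hence $\varphi(\kappa T)\le a$ and, by monotonicity, $\varphi(t)\le a$, i.e. $\log\Delta(t)\le a\,t^{1/\alpha}$, for all $t\ge\kappa T$. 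On the complementary range $0\le t\le\kappa T$ the continuity and monotonicity of $\Delta$ give $\Delta(t)\le\Delta(\kappa T)\le e^{a(\kappa T)^{1/\alpha}}\le e^{2a\,T^{1/\alpha}}$ since $\kappa\le2$ and $\alpha>1$. Combining the two ranges yields the single-factor estimate $\Delta(t)\,e^{-\mu t^{1/\alpha}}\le e^{2a\,T^{1/\alpha}}$ for every $\mu\ge a$: on $t\ge\kappa T$ the exponent is $(a-\mu)t^{1/\alpha}\le0$, and on $t\le\kappa T$ it is controlled by the previous line. The elementary factor is treated the same way, since $(1+t)^{\bar r}e^{-\mu t^{1/\alpha}}$ is bounded by an explicit constant $e^{C(\bar r,\mu)}$, which is absorbed into $T$ after harmlessly enlarging it.

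To pass from a single factor to the $n$-th power I would use the summable sequence $\{\eta_\nu\}_{\nu\ge0}$, $\sum_{\nu\ge0}\eta_\nu\le\eta$, to split the weight as $e^{-\eta t^{1/\alpha}}\le\prod_{\nu}e^{-\eta_\nu t^{1/\alpha}}$, and to distribute one copy $\Delta(t)\,e^{-\eta_\nu t^{1/\alpha}}$ of the single-factor estimate (with its associated threshold $T_\nu$) to each of the $n$ factors of $\Delta^{n}$, and likewise for the $\bar r$ copies of $(1+t)$; re-multiplying and using that the hypothesis fixes $T$ large enough that each $T_\nu\le T$, one obtains $(1+t)^{\bar r}\Delta^{n}(t)e^{-\eta t^{1/\alpha}}\le\prod_{\nu}e^{\eta_\nu T^{1/\alpha}}\le e^{\eta T^{1/\alpha}}$, which is the claim. (Already an equal split of $\eta$ into $n+\bar r$ pieces gives the bound up to an explicit multiplicative constant, valid precisely when $a$ is small compared to $\eta/(n+\bar r)$, which is exactly what the normalisation $\frac1{\log\kappa}\int_T^{\infty}\frac{\log\Delta(t)}{t^{1+1/\alpha}}{\rm d}t\le a$ together with large $T$ enforces; the sequence is what lets one push the constant down to $1$.) The small-$t$ regime, not covered by the asymptotic statement \eqref{ab}, is handled directly by continuity of $\Delta$ on the compact interval $[0,\varsigma]$.

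The main obstacle I anticipate is the bookkeeping: one must balance the single free threshold $T$ appearing in the conclusion against the thresholds $T_\nu$ generated when peeling off the $n$ factors of $\Delta$ and the $\bar r$ factors of $(1+t)$, i.e. one must choose the summable sequence $\{\eta_\nu\}$ and invoke the convergence \eqref{ac} of the tail integral so that $\sum_\nu\eta_\nu T_\nu^{1/\alpha}\le\eta\,T^{1/\alpha}$. This is the only delicate point; the role of the ratio $\kappa$ in the hypothesis is to make the chain of inequalities in the linear estimate above an equality up to the explicit factor $\log\kappa$, and getting all these constants to line up so that the final bound is exactly $e^{\eta T^{1/\alpha}}$, rather than $e^{C\eta T^{1/\alpha}}$ for an unspecified $C$, is where the care is needed. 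Once this lemma is in place it feeds directly into the perturbation estimates of Section 3, where the factor $e^{4d_0^2 aT^{1/\alpha}}$ in the bound for $\|F\|_{\rho-r,\sigma}^{\alpha}$ is precisely an instance of $\Gamma_{\bar r,n}$ with $n=4d_0^2$.
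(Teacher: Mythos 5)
Your route — linearize $\log\Delta$ pointwise, bound a single factor, then multiply $n+\bar r$ copies — is genuinely different from the paper's, and it cannot give the stated constant. Your chain $\varphi(\kappa T)\log\kappa\le\int_T^{\kappa T}\varphi(s)s^{-1}\,{\rm d}s\le a\log\kappa$ is correct and yields $\log\Delta(t)\le a\,t^{1/\alpha}$, but only for $t\ge\kappa T$; on $[0,\kappa T]$ the best you get is $\Delta(t)\le\Delta(\kappa T)\le e^{a(\kappa T)^{1/\alpha}}$, so every per-factor bound your method produces carries the factor $\kappa^{1/\alpha}>1$ in the exponent, and multiplying $n$ copies of $\Delta(t)e^{-a t^{1/\alpha}}$ and $\bar r$ copies of $(1+t)e^{-c t^{1/\alpha}}$ gives $e^{\kappa^{1/\alpha}\eta T^{1/\alpha}}$, not $e^{\eta T^{1/\alpha}}$. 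The bookkeeping you propose at the end cannot repair this: since the hypothesis only controls $\int_T^\infty$, no per-factor threshold $T_\nu$ can sit below $\kappa T$, so $\sum_\nu\eta_\nu T_\nu^{1/\alpha}\ge\kappa^{1/\alpha}T^{1/\alpha}\sum_\nu\eta_\nu$ and the budget $\sum_\nu\eta_\nu\le\eta$ leaves the extra factor intact; sending $\kappa\to1^+$ only inflates $a$ through the $1/\log\kappa$ normalization. You sense the problem ("getting all these constants to line up\ldots is where the care is needed"), but it is not a bookkeeping issue — the pointwise-linearization strategy is structurally unable to hit the sharp exponent.

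The paper never replaces $\log\Delta$ by a linear majorant. Writing $\delta_{\bar r,n}(t)=\bar r\log(1+t)+n\log\Delta(t)$ and putting $\eta_\nu=\delta_{\bar r,n}(t_\nu)/t_\nu^{1/\alpha}$ at the geometric points $t_\nu=\kappa^\nu T$, the monotone decay \eqref{ab} of the ratio gives $\delta_{\bar r,n}(t)-\eta_\nu t^{1/\alpha}\le 0$ for every $t\ge t_\nu$, while on $[0,t_\nu]$ the same expression is $\le\delta_{\bar r,n}(t_\nu)$ because $\delta_{\bar r,n}$ is increasing; together these give $\Gamma_{\bar r,n}(\eta_\nu)\le e^{\delta_{\bar r,n}(t_\nu)}=e^{\eta_\nu t_\nu^{1/\alpha}}$ with no spurious constant. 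The integral hypothesis enters only to verify $\eta_\nu\le\sum_\mu\eta_\mu\le\eta$, after which monotonicity of $\Gamma_{\bar r,n}(\cdot)$ in its argument closes the proof at $\nu=0$. What your proposal lacks is precisely this observation: one must compare $\delta_{\bar r,n}(t)$ with the ray of slope $\eta_0=\delta_{\bar r,n}(T)/T^{1/\alpha}$ passing through the point $(T,\delta_{\bar r,n}(T))$ — where monotonicity of the ratio makes the difference nonpositive for $t\ge T$ for free — rather than with a uniform linear majorant of $\log\Delta$ that is only available at a strictly larger threshold.
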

\begin{proof}
Let $\delta_{\bar{r},n}(t)=\log(1+t)^{\bar{r}}\Delta^{n}(t)$, and
\begin{align*}
\eta_{\nu}=\delta_{\bar{r},n}(t_{\nu})/t_{\nu}^{\frac{1}{\alpha}},\ t_{\nu}=\kappa^{\nu}T
\end{align*}
for any $\nu\geq0$. According to the logarithmic laws, we have
\begin{align*}
\delta_{\bar{r},n}(t)=\log(1+t)^{\bar{r}}\Delta^{n}(t)=\bar{r}\log(1+t)+n\log\Delta(t).
\end{align*}
Obviously, the function $(1+t)$ naturally is the $\alpha-$approximation function.
We choose two positive sequences, which are respectively $\{a_{\nu}\}_{\nu\geq0},\{c_{\nu}\}_{\nu\geq0}$ in order that
\begin{align*}
\eta_{\nu}=\delta_{\bar{r},n}(t_{\nu})/t_{\nu}=n\log\Delta(t_{\nu})/t^{\frac{1}{\alpha}}_{\nu}+\bar{r}\log(1+t)/t^{\frac{1}{\alpha}}_{\nu}=na_{\nu}+\bar{r}c_{\nu},
\end{align*}
where $\eta_{1}\geq\eta_{2}\geq\cdots>0$ and $\sum_{\nu\geq0}\eta_{\nu}\leq\eta$.
Due to the constraint \eqref{ac}, this provides
\begin{align*}
\sum_{\nu\geq0}a_{\nu}\leq\int_{0}^{\infty}\frac{\log\Delta(t_{\nu})}{t^{\frac{1}{\alpha}}_{\nu}}
{\rm d}\nu\leq\frac{1}{\log\kappa}\int_{T}^{\infty}\frac{\log\Delta(t)}
{t^{1+\frac{1}{\alpha}}}{\rm d}t\leq a.
\end{align*}
In the same way,
\begin{align*}
\sum_{\nu\geq0}c_{\nu}\leq\int_{0}^{\infty}\frac{\log(1+t_{\nu})}{t^{\frac{1}{\alpha}}_{\nu}}{\rm d}\nu
\leq\frac{1}{\log\kappa}\int_{T}^{\infty}\frac{\log(1+t)}{t^{1+\frac{1}{\alpha}}}{\rm d}t
\leq c.
\end{align*}
Then
\begin{align*}
\sum_{\nu\geq0}\eta_{\nu}\leq\int_{0}^{\infty}\frac{\delta_{s}(t_{\nu})}{t^{\frac{1}{\alpha}}_{\nu}}{\rm d}\nu
\leq\frac{1}{\log\kappa}\int_{T}^{\infty}\frac{\delta_{s}(t)}
{t^{1+\frac{1}{\alpha}}}{\rm d}t
\leq \eta
\end{align*}
with $\eta=na+\bar{r}c$.
Since $\delta_{\bar{r},n}(t)-\eta_{\nu} t^{\frac{1}{\alpha}}\leq0$ for $t\geq t_{\nu}$ by monotonicity \eqref{ab}, and on the interval $[0,t_{\nu}]$, the supremum is achieved and smaller than $\delta_{\bar{r},n}(t_{\nu})$. It follows that
\begin{align*}
\Gamma_{\bar{r},n}(\eta_{\nu})=\sup_{t\geq0}e^{\bar{r}\log(1+t)+n\log\Delta(t)-\eta_{\nu}t^{\frac{1}{\alpha}}}\leq e^{\delta_{\bar{r},n}({t_{\nu}})}=e^{\eta_{\nu}t_{\nu}^{\frac{1}{\alpha}}}
\end{align*}
by the definition of $\eta_{\nu}$. The function $\Gamma_{s}(\eta)$ is monotonically decreasing with respect to $\eta$, so we conclude that
\begin{align*}
\Gamma_{\bar{r},n}(\eta)\leq\Gamma_{\bar{r},n}(\eta_{\nu})\leq e^{\eta_{\nu}t_{\nu}^{\frac{1}{\alpha}}}\leq e^{\eta t_{\nu}^{\frac{1}{\alpha}}}
\end{align*}
for any $\nu\geq0$. For convenience, we take $\nu=0$, that is,
\begin{align*}
\Gamma_{\bar{r},n}(\eta)\leq\Gamma_{\bar{r},n}(\eta_{\nu})\leq e^{\eta_{\nu}t_{\nu}^{\frac{1}{\alpha}}}\leq e^{\eta T^{\frac{1}{\alpha}}}
\end{align*}
with $\eta=na+c\bar{r}$.
\end{proof}
The result here is a generalization of the lemma 2.1 result in reference \cite{a}.
\section*{Acknowledgments}
The second author (Y. Li) was supported by  National Natural Science Foundation of China (12071175 and 12471183).

\section*{References}

\end{document}